\theoremstyle{plain}
\newtheorem{lemma}{Lemma}[section]
\newtheorem{corollary}[lemma]{Corollary}
\newtheorem{proposition}[lemma]{Proposition}
\newtheorem{theorem}[lemma]{Theorem}
\newtheorem{conjecture}[lemma]{Conjecture}
\newtheorem{question}[lemma]{Question}
\theoremstyle{definition}
\theoremstyle{remark}
\newtheorem{remark}[lemma]{Remark}
\newcommand{\Z}{\mathbb{Z}}
\newcommand{\N}{\mathbb{N}}
\title{Snakes can be fooled into thinking they live in a tree}
\author{Laurent Bartholdi \and Ville Salo}
\date{September 17, 2024}
\dedicatory{Happy birthday, and long life, Jarkko!}
\begin{document}
\maketitle

\begin{abstract}
We construct a finitely generated group which is not virtually free, yet has decidable snake tiling problem. This shows that either a long-standing conjecture by Ballier and Stein (the characterization of groups with decidable domino problem as those virtually free ones) is false, or a question by Aubrun and Bitar has a positive answer (there exists a group for which the domino and snake problems are of different difficulty).
\end{abstract}

\section{Introduction}
The \emph{domino problem} on a finitely generated group $G=\langle S\rangle$ asks to decide, given finite sets $C$ of ``colours'' and $D\subseteq C\times S\times C$ of ``dominoes'', whether $G$'s Cayley graph can be vertex-coloured by $C$ in such a manner that every edge carries a domino in $D$.

When $G=\Z$ this is the classical problem with standard $2\times 1$ dominoes, and is easily solvable. When $G=\Z^2$ dominoes are usually called \emph{Wang tiles}, and Berger's celebrated result~\cite{Berger66} is that this problem is unsolvable.

The domino problem belongs in fact to a very small fragment of the monadic second-order logic of $G$'s Cayley graph. Much attention has been devoted to delineating the precise boundary between decidability and undecidability, and the Ballier-Stein conjecture states that it is decidable precisely when the whole monadic second-order logic is decidable, namely precisely when $G$ has a finite-index free subgroup (it is \emph{virtually free}):

\begin{conjecture}[Ballier \& Stein~\cite{BS18}]
\label{con:pgdomino}
  The domino problem on a finitely generated group $G$ is decidable if and only if $G$ is virtually free.
\end{conjecture}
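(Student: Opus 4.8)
The plan is to prove the biconditional by treating its two implications separately, since they are of entirely different character. The forward direction ``virtually free $\Rightarrow$ domino problem decidable'' can be assembled from standard ingredients and should be regarded as the routine half. The reverse direction ``domino problem decidable $\Rightarrow$ virtually free'' carries the genuine content of the conjecture, and essentially all the difficulty concentrates there.

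For the direction where $G$ is virtually free, I would proceed as follows. First observe that the domino problem is expressible by an existential monadic second-order sentence interpreted in the Cayley graph: one quantifies existentially over the $|C|$ colour classes (a partition of the vertex set) and then imposes the edge-local first-order condition that every edge $(g,gs)$ carry a domino of $D$. Consequently, decidability of the full monadic second-order theory of the Cayley graph already yields decidability of the domino problem. Next, when $G$ is free its Cayley graph is a regular tree, whose monadic second-order theory is decidable by Rabin's tree theorem; hence free groups have decidable domino problem. Finally I would invoke the commensurability invariance of the domino problem --- if $H\le G$ has finite index then the two instances are effectively interreducible, by folding the $|G:H|$ cosets into enlarged colours --- to pass from a finite-index free subgroup to $G$ itself. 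This settles the easy half.

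For the converse I would argue by contraposition, assuming $G$ is not virtually free and attempting to encode an undecidable computation into tilings of its Cayley graph. The cleanest sub-case is when $G$ contains a subgroup commensurable to $\Z^2$: there one hopes to transport the undecidability of Berger's problem~\cite{Berger66} along the subgroup, forcing a $\Z^2$-sublattice structure within the tilings of the relevant cosets. Here one would marry the structure theory of finitely generated groups --- Stallings splittings together with Dunwoody accessibility --- with such embedding arguments, so as to reduce the general statement to a manageable list of ``base'' non-virtually-free groups.

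The main obstacle is precisely the residue left by that reduction: there exist finitely generated, non-virtually-free groups containing no copy of $\Z^2$ whatsoever --- for instance one-ended hyperbolic groups such as surface groups, or infinite torsion groups --- so that no $\Z^2$-transport can reach them, and undecidability would have to be manufactured intrinsically, by simulating a Turing machine in the group's own geometry through an aperiodic construction keyed to the failure of tree-likeness of the Cayley graph. No uniform mechanism for doing this across all non-virtually-free groups is known, which is exactly why the statement stands as a conjecture rather than a theorem. Indeed, the thrust of the present paper is to cast doubt on it, by exhibiting a non-virtually-free group on which the closely related \emph{snake} problem is decidable; I would therefore expect any honest attempt at the converse either to yield such a uniform simulation scheme, or --- as the paper suggests is the more likely outcome --- to run aground and reveal the conjecture to be false.
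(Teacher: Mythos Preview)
The statement you were asked to prove is a \emph{conjecture}, not a theorem: the paper does not prove it, nor attempt to. It is stated as Conjecture~\ref{con:pgdomino}, attributed to Ballier and Stein, and the paper's contribution is precisely to cast doubt on it by showing that the analogous statement for snakes fails. There is therefore no ``paper's own proof'' against which to compare your proposal.

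Your discussion is accurate as a survey of the landscape. The forward direction (virtually free $\Rightarrow$ decidable domino problem) is indeed known and your sketch via monadic second-order logic and Rabin's theorem is the standard route; the paper itself alludes to this when it says the domino problem belongs to a small fragment of the monadic second-order logic of the Cayley graph. Your assessment of the reverse direction is also correct: it is open, the $\Z^2$-transport argument covers only a fragment of non-virtually-free groups, and no uniform simulation mechanism is known. You rightly conclude that the statement stands as a conjecture rather than a theorem --- which is exactly how the paper presents it.
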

Recall that a group is virtually free if and only if its Cayley graph is quasi-isometric (see~\S\ref{ss:amalgams}) to a tree.

\emph{Snake tiling problems} relax the constraint that $G$'s Cayley graph be entirely vertex-coloured; the basic variant requires merely that an infinite path (the \emph{snake}) be vertex-coloured, with legal dominoes on the path's edges. Snake problems were introduced by Myers~\cite{My79}, and this specific one appears in~\cite{Et91,EtHaMy94}. Kari~\cite{Kari03} proved (as conjectured in~\cite{EtHaMy94}) that it is unsolvable for the grid $\Z^2$. Kari's solution arises in fact from his independent earlier work in~\cite{Ka94} on undecidability of the reversibility (and surjectivity) problem for cellular automata, where he used a variant of the snake problem for directed tile sets (where the snake can in a sense choose where it moves).

Aubrun and Bitar~\cite{AB23} consider numerous variants of the snake problem on finitely generated groups, and ask:

\begin{question}[Aubrun \& Bitar~{\cite[Question~8.2]{AB23}}]
Is there a finitely generated group with undecidable Domino Problem and decidable snake problem? Is there such a group where the inverse holds?
\end{question}

They show that essentially all variants of the snake tiling problem are decidable on virtually free groups, and conversely are undecidable on a large class of groups including all (central infinite non-torsion)-by-(infinite non-torsion) groups. Note that the domino problem is independent of the choice of generating set $S$, while this is unclear for the snake problems.

These results might lead credence to the conjecture that snake problems are decidable precisely on virtually free groups; but we show in this article that such a variant of the Ballier-Stein conjecture is not true:

\begin{theorem}
  There exists a finitely generated group which is not virtually free, but which has decidable snake tiling problem for any generating set.
\end{theorem}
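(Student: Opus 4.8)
The plan is to obtain a group $G$ for which the snake tiling problem on $(G,S)$ can be decided by exactly the kind of machinery that settles it on virtually free groups in the work of Aubrun and Bitar --- in the end a monadic second order query over the regular tree, decidable by Rabin's theorem --- while arranging that $G$ is nonetheless very far from being quasi-isometric to a tree. The slogan is that although $G$'s Cayley graph is not a tree, a self-avoiding snake moving in it always sees a tree-like array of options, so the \emph{global} existence of a legal snake is governed by the same finitary combinatorics as in a tree.

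Concretely I would first set up the skeleton of a decision procedure. The construction will make $G$ recursively presented with decidable word problem, so that ``no legal infinite snake exists'' is already semi-decidable: by a K\"onig's lemma argument a legal infinite snake exists iff legal finite snakes of every length do, hence if none exists this is witnessed by a single length at which exhaustive search fails. It therefore suffices to make the positive instances semi-decidable, and for that the heart of the matter is the following property, which I would isolate and prove for $G$: \emph{whenever a legal infinite snake exists (for some finite colour--domino set), an eventually periodic one exists}, i.e.\ a legal self-avoiding path that is a finite initial segment followed by the forward orbit of a single group element carrying a fixed colouring pattern. Such a path is a finite certificate (its genuine self-avoidingness and infinitude being checkable from the word problem plus the structural information about $G$), so enumerating certificates semi-decides the positive instances and decidability follows; on a tree this property is automatic, because there a legal infinite snake exists iff a certain finite labelled graph has a reachable cycle, which already yields a periodic snake.

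Second comes the construction of $G$. I would build it as an iterated, self-similar amalgam whose Cayley graph is, at every scale and for every finite generating set, a ``tree of uniformly snake-controllable pieces'': pieces along which a self-avoiding path can behave in only boundedly many ways --- for instance pieces quasi-isometric to lines, or of uniformly bounded size, glued along bounded data --- so that the traversal of a piece is summarised by a bounded label and the global combinatorics of a snake becomes a walk in an underlying tree, decorated by finitely much information. Since a snake is one-dimensional and self-avoiding, it never encloses the additional cycles that make $G$ non-tree-like; those cycles are inserted along the tree structure in a sparse, rapidly growing pattern so that a snake only ever interacts with them through bounded memory. From this piecewise-tree description one extracts the periodic-certificate property above by a pumping argument on walks in the underlying tree, and, separately, one verifies that $G$ is not virtually free by exhibiting a coarsely embedded non-quasi-tree configuration (a treebolic or Baumslag--Solitar type piece) or, depending on how the parameters are chosen, because $G$ comes out infinite and torsion and hence cannot be virtually free.

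The main obstacle is doing all of this \emph{uniformly in the generating set}. Unlike the domino problem, the snake problem is not known to be independent of the generating set, and replacing $S$ by an arbitrary finite generating set introduces new short cycles and thickens the Cayley graph --- precisely the extra room that Kari's argument exploits on $\Z^2$ to defeat decidability. The crux of the proof must therefore be a robustness statement: that the piecewise-tree description of self-avoiding snakes, and with it the periodic-certificate property, survives an arbitrary change of generators up to enlarging the finite label alphabet, so that ``the snakes of $G$ are tree-readable'' is a coarse, generating-set-stable feature. Getting the quantitative parameters of the iterated construction to make this robustness hold for every generating set at once, while keeping $G$ genuinely non-virtually-free, is where essentially all the work lies; once the tree description is in hand, the reduction to Rabin's theorem --- equivalently, the production of periodic certificates --- is routine and is already implicit in the cited virtually free case.
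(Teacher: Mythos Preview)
Your intuitions point in roughly the right direction --- periodic snakes as certificates, tree-like approximation, Rabin's theorem as the engine --- but the proposal misses the central mechanism of the paper and, as written, has a genuine gap rather than a different route.

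The paper does \emph{not} prove that snakes in $G$ ``see a tree'', nor that $G$ itself enjoys the periodic-snake property. Instead $G$ is built as a limit, in the space of marked groups, of an explicit sequence $G_0\to G_1\to\cdots$ of virtually free groups (HNN extensions $A_n*_{A_{n-1}}$ of a tower of finite groups, all mapping onto $\Z$), and the decision procedure is a \emph{diagonalization over tilesets}. One enumerates all directed tilesets $\Theta_0,\Theta_1,\ldots$; at stage $n$ one solves the snake problem for $\Theta_n$ on the virtually free group $G_n$ --- this is where Rabin's theorem and the periodic-snake lemma are invoked, on $G_n$, never on $G$ --- and then one chooses how large a ball $G_{n+1}$ must share with $G_n$ so that the answer just computed is forced to persist to the limit. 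If $G_n$ carries a periodic $\Theta_n$-snake with nontrivial $\Z$-projection, a large enough preserved ball keeps its period alive in every later $G_m$ and hence in $G$; if $G_n$ admits no $\Theta_n$-snake, a large enough ball preserves the finite witness of failure; the residual case (snakes exist but all have bounded $\Z$-projection) is eventually eliminated by a separate hypothesis on the sequence (bounded $\Z$-slices have finite connected components in $G_m$ for $m$ large). Non-virtual-freeness of $G$ comes for free: the $A_n$ are finite subgroups of unbounded order. Your plan, by contrast, asks for a structural theorem about $G$ itself --- that the periodic-certificate property holds in $G$ for every generating set --- which you correctly flag as ``where essentially all the work lies'' and then do not supply; the paper never needs such a theorem, because it only ever reasons about the virtually free approximants and controls the speed of convergence.

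On the generating-set issue you single out as the main obstacle: the paper disposes of it in two short lemmas \emph{before} the main argument. All the snake variants reduce to the directed strong problem, and a directed strong instance over one generating set can be rewritten over any larger one (the direction component lets the tileset simply ignore the extra generators). So uniformity in $S$ is a syntactic reduction at the level of tilesets, after which the diagonalization simply ranges over tilesets with arbitrary finite memory sets $S'\subset F_S$; it is not a coarse-geometric robustness statement about $G$. Your proposed ``tree of snake-controllable pieces, stable under change of generators'' is aiming at a much harder target than is actually needed.
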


In particular, either the Ballier-Stein conjecture is false (if the domino problem is solvable for this group), or the Aubrun-Bitar question has a positive answer for this group. We conjecture in fact that the former holds, and that an improvement of our construction will lead to a non-virtually free group with solvable domino problem.

\def\mathvisiblespace{\text{\textvisiblespace}}

\section{Snake problems}
We collect here some basic definitions, and an overview of the different variants of the snake problem.

Recall that, for a finitely generated group $G=\langle S\rangle$ with $S$ a finite generating set, its \emph{Cayley graph} is the graph with vertex set $G$ and for each $g\in G,s\in S$ an edge labeled `$s$' from $g$ to $g s$.

\begin{definition}[Tileset]
  Let $G=\langle S\rangle$ be a finitely generated group. A \emph{tileset} for $G$ is a pair $\Theta=(C,D)$ of finite sets with $D\subseteq C\times S\times C$.

  If the dependency on $S$ is unclear, we will denote the tileset by $\Theta=(S,C,D)$, and call $S$ the \emph{memory set} of $\Theta$.
\end{definition}

We may visualize a tileset as a multigraph still written $\Theta$ with vertex set $C$, and for each $(c,s,c')\in D$ an edge from $c$ to $c'$ labeled `$s$'.

\begin{definition}[Domino problem]
  The \emph{domino problem} for a finitely generated group $G=\langle S\rangle$ asks to determine, given a tileset $(C,D)$, whether there exists a vertex-colouring $x\colon G\to C$ of $G$'s Cayley graph such that $(x(g),s,x(g s))\in D$ for all $g\in G,s\in S$.
\end{definition}
In other words, the domino problem asks to determine whether there exists a map of labeled graphs from $G$'s Cayley graph to the graph associated with $(C,D)$.

We collect now a variety of \emph{snake problems} for a group $G=\langle S\rangle$. We insist that these problems \emph{a priori} depend on the choice of generating set $S$.
\begin{definition}[Snake problem]
  Let $G=\langle S\rangle$ be a finitely generated group. An \emph{snake} is an injective map $\omega\colon\Z/n\Z\to G$, for some $n\ge0$, such that $\omega(i)^{-1}\omega(i+1)\in S$ for all $i\in\Z/n\Z$. It is an \emph{infinite snake} if $n=0$ and an \emph{ouroboros} if $n\ge1$.

  Let $(C,D)$ be a tileset. The \emph{weak snake problem} asks whether there exists a snake $\omega\colon\Z/n\Z\to G$ and a colouring $x\colon\omega(\Z/n\Z)\to C$ such that $(x(\omega(i)),\omega(i)^{-1}\omega(i+1),x(\omega(i+1)))\in D$ for all $i\in\Z/n\Z$; then $(\omega, x)$ is called a \emph{weak snake}. It subdivides into the \emph{weak infinite snake problem} and \emph{weak ouroboros problem} if we further require $n=0$ or $n\ge1$.
  
  The \emph{strong snake problem} asks whether there exists a snake $\omega\colon\Z/n\Z\to G$ and a colouring $x\colon\omega(\Z/n\Z)\to C$ such that $(x(g),s,x(g s))\in D$ whenever $g,gs\in\omega(\Z/n\Z)$; then $(\omega, x)$ is called a \emph{strong snake}. Again it subdivides into the \emph{strong infinite snake problem} and \emph{strong ouroboros problem} according to whether $n=0$ or $n\ge1$.

  The \emph{directed weak/strong snake problem} has the modified requirement that the colouring be of the form $x\colon\omega(\Z/n\Z)\to C \subseteq C'\times S$, and the snake always follows the direction in the $S$-component, namely $x(\omega(i))=(*,\omega(i)^{-1}\omega(i+1))$ for all $i\in\Z/n\Z$.
\end{definition}

In total, we have defined $12$ different snake problems, by combining the (weak/strong), (directed/undirected) and (snake/infinite snake/ouroboros) constraints. These problems are stated for a finitely-generated group with a fixed generating set. We also define versions without a generating set, i.e.\ for $X=$ any of these $12$ problems, the \emph{$X$ snake problem for the group $G$} is ``given a finite generating set $S\subseteq G$ and a tileset for $G=\langle S\rangle$, is there an $X$?''.

Naturally, if problem $X$ is decidable for a group $G$, then it is decidable for the group $G=\langle S\rangle$ with any fixed generating set. We give simple reductions between the above problems.

The weak infinite snake problem naturally combines a classical $1$-dimensional domino problem, that of colouring the integers according to domino rules, with an added geometric constraint that every domino carries an element of $S$, and the path traced by the corresponding edges in $G$'s Cayley graph is injective.

The strong infinite snake problem is perhaps the most natural especially in the context of Conjecture~\ref{con:pgdomino}, in that it may equivalently be formulated as follows: ``does there exist an infinite, connected subgraph of $G$'s Cayley graph and a valid $(C,D)$-tiling of this subgraph?''. Indeed every infinite snake traces an infinite connected subgraph, and every infinite connected subgraph contains an infinite line, which can be made bi-infinite by taking a limit of translates.

On the other hand, the strong directed snake problem is quite robust, as the following lemmas show.

\begin{lemma}\label{lem:ReduceToStrong}
  The strong, weak and directed weak problems all reduce to the directed strong problem.
\end{lemma}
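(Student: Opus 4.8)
The plan is to exhibit three many-one reductions, each turning an instance $(C,D)$ of one of the source problems into an instance $(C',D')$ of the directed strong problem such that a tiling of the required kind exists for the former if and only if it does for the latter. Throughout, a directed tileset has colours in $C'\subseteq C''\times S$, and the defining feature of the directed strong problem is twofold: the snake must follow the $S$-component of its colours, \emph{and} all dominoes induced among pairs of snake vertices at generator-distance must be legal. The key observation making these reductions go through is that once we force a colouring to record, at each vertex, the generator by which the snake leaves that vertex, a directed snake automatically carries enough information to reconstruct the undirected constraints — and conversely.

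First I would handle \emph{directed weak $\to$ directed strong}. Given a directed weak tileset $(C,D)$ with $C\subseteq C''\times S$, I build $C'=C$ but enlarge the domino set: put $(c,s,c')\in D'$ whenever either $(c,s,c')\in D$ (the ``forward'' edges the snake actually traverses) \emph{or} $c=(*,t)$ with $t\neq s$ and $t\neq s^{-1}$, i.e. I simply declare legal every edge that a directed snake could not possibly be forced to use because it is leaving in a different direction; edges with $s^{-1}$ equal to the outgoing direction of the endpoint are handled symmetrically. The point is that in a directed snake, if $g$ and $gs$ are both on the snake, then either the snake steps from $g$ to $gs$ (so the $S$-component of $x(g)$ is $s$ and the domino $(x(g),s,x(gs))$ is one the weak problem already checked) or the snake steps from $gs$ to $g$ (symmetric) or neither, in which case we have made the domino vacuously legal. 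So a directed weak snake for $(C,D)$ is exactly a directed strong snake for $(C',D')$. The \emph{weak $\to$ directed weak} reduction is the standard one: replace $C$ by $C\times S$ and let $((c,s),s,(c',s'))\in D'$ iff $(c,s,c')\in D$; this forces the colour to announce the next step, and forgetting the $S$-coordinate is a bijection between weak snakes and directed weak snakes. Composing gives \emph{weak $\to$ directed strong}.

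The remaining, and I expect the slightly more delicate, case is \emph{strong $\to$ directed strong}. The subtlety is that in a strong (undirected) snake the colouring must make \emph{all} induced dominoes legal, including the domino on the edge from $g$ to $gs$ when the snake visits both but travels between them in the ``wrong'' orientation relative to how $D$ lists the triple; but $D\subseteq C\times S\times C$ and $s\in S$ does not force $s^{-1}\in S$, so I should first symmetrise: replace $D$ by $D\cup\{(c',s^{-1},c):(c,s,c')\in D,\ s^{-1}\in S\}$, or more safely observe that the snake condition $\omega(i)^{-1}\omega(i+1)\in S$ together with injectivity already lets me assume $S=S^{-1}$ by passing to the generating set $S\cup S^{-1}$ (a harmless move for the ``with any generating set'' formulation, and for the fixed-$S$ statement one checks the generated relations are the same). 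Then, given a strong tileset $(C,D)$ over a symmetric $S$, I take $C'=C\times S$ and declare $((c,s),t,(c',t'))\in D'$ precisely when: (i) if $t=s$ then $(c,t,c')\in D$ — the snake is allowed to step this way only when the underlying domino is legal; and (ii) regardless, for every pair $(c,s),(c',t')$ that can sit at generator-distance $u$ on the snake (i.e. $t'=u^{-1}$ or the step is $u$), the underlying $(c,u,c')$ lies in $D$. Concretely it is cleanest to demand that \emph{every} triple $((c,s),u,(c',t'))$ with $(c,u,c')\notin D$ is excluded from $D'$, and additionally exclude $((c,s),t,(c',t'))$ whenever $t=s$ but $(c,s,c')\notin D$; one verifies that a directed strong snake for $(C',D')$, after forgetting second coordinates, is a strong snake for $(C,D)$ (the forgotten coordinate only ever constrains which way the snake moves, never which colours are adjacent), and conversely any strong snake for $(C,D)$ can be directed by recording its own steps, the strong condition guaranteeing all the newly-required dominoes hold. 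The main obstacle, such as it is, is bookkeeping: making sure the directedness constraint "$x(\omega(i))=(*,\omega(i)^{-1}\omega(i+1))$" is compatible with whichever orientation a given undirected snake happens to have, which is exactly why the symmetrisation of $S$ at the start is not optional.
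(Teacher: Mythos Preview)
Your three reductions are the same as the paper's, and the weak $\to$ directed weak step is identical. The other two are over-engineered, and in one place this creates a genuine gap.

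For \emph{strong $\to$ directed strong}, the paper's argument is one line: take $C'=C\times S$ and put $((c,s),u,(c',s'))\in D'$ iff $(c,u,c')\in D$. A directed strong snake for $(C',D')$ projects (forgetting the $S$-coordinate) to a strong snake for $(C,D)$, and conversely any strong snake $(\omega,y)$ lifts by setting $x(\omega(i))=(y(\omega(i)),\omega(i)^{-1}\omega(i+1))$. Your symmetrisation of $S$ is not needed: the step $\omega(i)^{-1}\omega(i+1)$ already lies in $S$ by the definition of a snake, so the lifted colouring is automatically directed. Your ``cleanest'' formulation is in fact exactly this construction once the redundant clauses are stripped away.

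For \emph{directed weak $\to$ directed strong}, the paper simply adds to $D$ every triple $((c,s),s'',c')$ with $s''\neq s$. Your version excludes the case $t=s^{-1}$ and then appeals to a vague ``symmetric'' clause; this leaves a hole. Take $g,gs$ both on the snake, with the snake stepping $g\to gs^{-1}$ (so the $S$-component of $x(g)$ is $s^{-1}$) and $gs\to gs^2$ (so the $S$-component of $x(gs)$ is $s$). This is your case ``neither'': the snake traverses neither $g\to gs$ nor $gs\to g$. But now $t=s^{-1}$ and $t'=s$, so neither your primary addition (which requires $t\notin\{s,s^{-1}\}$) nor any reasonable ``symmetric'' counterpart (which would require $t'\notin\{s,s^{-1}\}$) makes $(x(g),s,x(gs))$ legal, and your weak snake fails to be strong for $(C,D')$. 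The exclusion $t\neq s^{-1}$ is simply unnecessary: drop it and you recover the paper's construction, for which the verification is immediate.
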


\begin{proof}
  We will show `strong $\leq$ directed strong', and then `weak $\leq$ directed weak $\leq$ directed strong'.

  Consider first an instance $(C,D)$ of the strong problem. Replace $C$ by $C\times S$. A directed strong snake in the new tileset $(C\times S,B)$ is just a strong snake for the original. Thus, `strong $\leq$ directed strong'.

  The same argument gives `weak $\leq$ directed weak'.

  Finally, consider an instance $(C\times S,D)$ of the directed weak problem, with $D\subseteq C\times S\times C$, namely the dominoes in $D$ do not know the directions. We may add this information by replacing $C$ by $C'\coloneqq C\times S$, and set
  \[B'\coloneqq\{((c,s),s,(c',s')):(c,s,c')\in B,s'\in S\}.\]
  In this manner, the weak directed problem reduces to the variant of the weak problem in which the direction information is directly available to the dominoes.

  With this assumption in place, we can add to $B'$ all triples $((c, s), s'',(c', s'))$ with $s'' \neq s$. Any strong directed snake will remain a strong directed snake, since these new triples do not appear on $\omega(\Z/n\Z)$. However, now every weak directed snake is actually a strong snake as well, so the strong snakes for the new tileset are precisely the weak snakes for the old tileset.
\end{proof}

\begin{lemma}\label{lem:BiggerGeneratingSet}
If $(S, C, D)$ is an instance of the strong directed snake problem, and $T \supset S$ is any given finite set, then we can compute an instance $(T, C', D')$ of the strong directed snake problem which is equivalent to $(S, C, D)$.
\end{lemma}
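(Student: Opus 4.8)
The plan is to leave the colour set untouched and to make the new dominoes as permissive as possible on the generators in $T\setminus S$, exploiting the defining feature of a \emph{directed} tileset: the colours themselves record the direction of the snake, so a directed snake can never step along a generator outside $S$.

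Write the directed colours as pairs, so $C\subseteq\hat C\times S$ for some finite set $\hat C$; since $S\subseteq T$ we have $\hat C\times S\subseteq\hat C\times T$, so $C'\coloneqq C$ is already a legal directed colour set over the memory set $T$, and every colour of $C'$ has its $T$-component lying in $S$. Put
\[
D'\coloneqq D\;\cup\;\{(c,t,c'):c,c'\in C',\ t\in T\setminus S\}.
\]
In any directed snake $(\omega,x)$ for $(T,C',D')$, the requirement $x(\omega(i))=(*,\omega(i)^{-1}\omega(i+1))$ then forces $\omega(i)^{-1}\omega(i+1)\in S$ for every $i$, so $\omega$ is automatically an $S$-snake; this is the only place where directedness is used.

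Next I would verify the two implications. If $(\omega,x)$ is a strong directed snake for $(S,C,D)$, then $\omega$ is also an injective path in the $T$-Cayley graph, all of whose steps (hence directions) lie in $S\subseteq T$; the strong condition for labels in $S$ holds because $D\subseteq D'$, and whenever $g,gt\in\omega(\Z/n\Z)$ for some $t\in T\setminus S$ the triple $(x(g),t,x(gt))$ lies in $D'$ by construction, so $(\omega,x)$ is a strong directed snake for $(T,C',D')$. Conversely, if $(\omega,x)$ is a strong directed snake for $(T,C',D')$, then $\omega$ is an $S$-snake by the previous paragraph, and for all $g,gs\in\omega(\Z/n\Z)$ with $s\in S$ we get $(x(g),s,x(gs))\in D'\cap(C'\times S\times C')=D$, the intersection being $D$ because the added triples all carry a label in $T\setminus S$; hence $(\omega,x)$ is a strong directed snake for $(S,C,D)$. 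The integer $n$ plays no role, so this argument applies verbatim to the infinite-snake and the ouroboros sub-variants, and $(T,C',D')$ is visibly computable from $(S,C,D)$ and $T$.

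There is no serious obstacle here; the single point requiring care is that an $S$-snake drawn in the \emph{larger} Cayley graph can acquire new ``chords'' $\omega(i)^{-1}\omega(j)\in T\setminus S$ between non-consecutive vertices, so that the strong condition over $T$ imposes a priori extra constraints — which is exactly why $D'$ is defined to contain \emph{every} triple labelled by an element of $T\setminus S$, making those constraints vacuous.
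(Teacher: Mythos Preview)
Your argument is correct and coincides with the paper's approach: keep the colour set, add every triple labelled by an element of $T\setminus S$, and use directedness to force the path back into $S$. The paper only sketches this (``we can use the same tileset, except we check the constraints only in the directions in $S$'') and explicitly omits the details; your write-up supplies exactly those details, including the point about new $T\setminus S$-chords appearing between non-consecutive vertices, which is the reason one must make the added triples completely unconstrained.
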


\begin{proof}
By the construction in the previous proof, we may assume that the direction information is available to the snakes. Then, informally speaking, we can use the same tileset, except we check the constraints only in the directions in $S$. We omit the details.
\end{proof}

Let us repeat more precisely our main result:
\begin{theorem}\label{thm:main}
  There exists a finitely generated group $G$ which is not virtually free, but for which the strong directed infinite snake problem and the strong directed snake problem are decidable.
\end{theorem}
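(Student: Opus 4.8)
The plan is to produce an explicit finitely generated group $G=\langle S\rangle$ that is not virtually free but is \emph{thin enough that a self-avoiding path in its Cayley graph cannot distinguish it from a tree}. I would look for $G$ acting cocompactly on a simplicial tree $T$ (say the Bass--Serre tree of a splitting of $G$), equipped with a $G$-equivariant collapse map $\pi\colon\mathrm{Cay}(G)\to T$, and designed so that: every self-avoiding path $\omega$ in $\mathrm{Cay}(G)$ projects under $\pi$ to a walk in $T$ whose backtracking excursions are uniformly bounded in length and realise only finitely many combinatorial types, and so that the \emph{chord structure} of $\omega$ — the set of parameter pairs $(i,j)$ with $\omega(i),\omega(j)$ joined by a single $S$-edge — is recognised by a finite automaton reading the generator sequence $(s_i)=(\omega(i)^{-1}\omega(i+1))$. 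Failure of virtual freeness would be engineered in: I would arrange that $G$ contains a copy of $\Z^2$, or an infinite torsion subgroup, so that the ``extra'' infinite direction is there for the domino problem to see but too rigid for a self-avoiding path to use. The first step is thus to exhibit such a $G$, observe it is not virtually free (immediate from the chosen subgroup), and package the control properties as a single finite automaton $\mathcal A$ that reads $(s_i)$, recognises exactly the self-avoiding generator sequences, and in its states carries a bounded amount of local information about $\pi(\omega)$ together with the snake's chord structure. The existence of $\mathcal A$ is the real content of the construction.

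Granting the construction, the decidability argument is soft. By Lemma~\ref{lem:ReduceToStrong} it suffices to decide the strong directed infinite snake problem, and by Lemma~\ref{lem:BiggerGeneratingSet} together with the uniformity of the construction in the generating set, the statement ``for any generating set'' follows. Fix a tileset $\Theta=(C\times S,D)$. I would realise the set of valid strong directed infinite snakes as a subset of $(C\times S)^{\Z}$ and show it is a \emph{sofic subshift}: a bi-infinite word $((c_i,s_i))_{i\in\Z}$ is a valid snake precisely when (a) $(s_i)_i$ is recognised by $\mathcal A$, which makes this a sofic condition, (b) the local rule $((c_i,s_i),s_i,(c_{i+1},s_{i+1}))\in D$ holds everywhere, and (c) the remaining strong constraints hold — and these only couple positions whose relationship is already visible in the states of $\mathcal A$, so layering $D$ over $\mathcal A$ turns (c) into a subshift-of-finite-type condition. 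From $\Theta$ one then computes an effective presentation of this sofic subshift, and emptiness of a sofic subshift is decidable (it amounts to detecting a cycle in a finite labelled graph). This settles the infinite case. Moreover $\mathcal A$ yields a computable $N=N(\Theta)$ such that, in the absence of a valid infinite snake, every valid self-avoiding path, and hence every valid ouroboros, has length at most $N$; so the strong directed snake problem, which also allows ouroboroi, reduces to a finite search and is decidable as well.

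The entire difficulty sits in the construction of $G$ and the verification that $\mathcal A$ exists, and that is the step I expect to be the obstacle. The most natural non-virtually-free groups fail it: in $\Z^2$, and in surface groups, a self-avoiding path has room to wander two-dimensionally — exactly what Kari's argument exploits — so its self-avoiding-walk language is not recognisable by a finite automaton and its chords are unboundedly long; such groups cannot be fooled. One must therefore engineer $G$ so that the two-dimensional part of its geometry is genuinely inaccessible to a self-avoiding path: any attempt by a snake to ``enter'' it must either close a short cycle, or collide with an earlier part of the snake, or else be of bounded length and finitely many types. I would look for $G$ built from finite or free blocks glued along infinite but very rigidly embedded subgroups, and the heart of the proof would be a combinatorial lemma controlling how far, and in how many ways, a self-avoiding path in $\mathrm{Cay}(G)$ can deviate from the tree direction of $T$ before it is forced to intersect itself. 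Once that lemma is in hand the automaton $\mathcal A$ essentially writes itself, and by the previous paragraph the snake problems become decidable.
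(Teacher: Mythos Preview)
Your proposal is not a proof: you explicitly acknowledge that the construction of $G$ and the automaton $\mathcal A$ is the entire content, and you do not carry it out. Worse, the two halves of your plan are in tension. You propose to certify non-virtual-freeness by embedding $\Z^2$ (or an infinite torsion group) in $G$, and simultaneously to exhibit a finite automaton $\mathcal A$ that recognises self-avoiding generator sequences and records their chord structure. But once $\Z^2$ sits in $G$, a snake can spiral inside that $\Z^2$ with chords of arbitrary length and a self-avoiding-walk language that is not regular; this is precisely the phenomenon you cite from Kari, and it blocks the existence of $\mathcal A$. A uniform finite-state description of self-avoiding paths and their chords, valid for \emph{every} generating set, is essentially a reformulation of virtual freeness, so the strategy as stated is self-defeating. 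The ``rigidly embedded'' gluings you gesture at would have to be engineered so that no self-avoiding path can ever enter the two-dimensional part for more than bounded time, for every generating set; you give no indication of how to do this, and I do not see how it can be done.

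The paper's argument is of an entirely different nature and does \emph{not} produce a single geometrically nice group. Instead it builds $G$ by diagonalisation as a limit, in the space of marked groups, of a computable sequence of virtually free HNN extensions $G_n=A_n*_{A_{n-1}}$, all mapping onto $\Z$. Tilesets are enumerated as $\Theta_0,\Theta_1,\dots$; at stage $n$ one works on the virtually free group $G_n$, where monadic second-order logic decides whether $\Theta_n$ admits an infinite snake and, crucially, whether some snake has unbounded $\Z$-projection (Corollary~\ref{cor:WhichCase}). If so, a pumping argument via Rabin's theorem (Proposition~\ref{prop:PeriodicSnake}) produces a \emph{periodic} snake, and one then demands that $G_{n+1}$ agree with $G_n$ on a ball large enough to preserve one period of that snake, so it survives to $G$. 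If all snakes have bounded $\Z$-projection, one uses the local finiteness of $\Z$-slices (Lemma~\ref{lem:locfinite}) to see that in some later $G_m$ such snakes are confined to finite sets, reducing to a finite search; again one freezes a large enough ball. The limit $G$ is not virtually free because the $A_n$ are unboundedly large finite subgroups. Thus decidability for $G$ is obtained tileset by tileset, by arranging in advance that the answer for $\Theta_n$ coincides with the (computable) answer on $G_n$; there is no global automaton $\mathcal A$, and the group is not ``explicit'' in your sense but depends on the enumeration of tilesets and on the computable function $\Phi$ of Theorem~\ref{thm:technical}.
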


Lemmata~\ref{lem:ReduceToStrong} and~\ref{lem:BiggerGeneratingSet} then imply that the other variants of snake and infinite snake problems are decidable for $G$.

We are not able to settle the problem for ouroboroi; the following remains open.

\begin{question}
  Is there a finitely generated group which is not virtually free, and for which some / all of the ouroboros problems are decidable?
\end{question}

\section{Amalgamated free products and HNN extensions}\label{ss:amalgams}
For groups $A,B,C$ with imbeddings $A\hookleftarrow C\hookrightarrow B$, recall that their amalgamated free product $A*_C B$ is the universal (``freest'') group generated by $A\cup B$ in which both images of $C$ coincide. Write $A=C\cdot T_A$ and $B=C\cdot T_B$ by choosing transversals $T_A,T_B$ of $C$ in $A,B$ respectively; without loss of generality $1\in T_A$ and $1\in T_B$. There is then a normal form for $A*_C B$, see~\cite[Theorem~IV.2.6]{LS77}: every element can be uniquely represented as the product of an element of $C$ with a word alternating in $T_A\setminus\{1\}$ and $T_B\setminus\{1\}$.

Recall that a quasi-isometry between metric spaces $X,Y$ (such as graphs) is a map $f\colon X\to Y$ such that, for some constant $K$, we have $K^{-1} d(x_1,x_2)-K\le d(f(x_1),f(x_2))\le K d(x_1,x_2)+K$ and $d(f(X),y)\le K$ for all $x_1,x_2\in X,y\in Y$. \emph{Geometric} properties of groups are those that may be defined on the group's Cayley graph and are invariant under quasi-isometry.

If $A,B$ are finite, then $G\coloneqq A*_C B$ is virtually free, and its Cayley graph is quasi-isometric to a tree. Better, there is a natural tree on which $G$ acts with finite stabilizers, its \emph{Bass-Serre tree}. Its vertices are right cosets of $A,B$ respectively in $G$, its edges are right cosets of $C$, with endpoints given by coset inclusion, and the action is given by right multiplication.

Closely related is the \emph{HNN extension} $A*_C$. It is the universal group generated by $A\cup\{t\}$ in which $t$ conjugates both imbeddings of $C$. There is a natural map $A*_C\to\Z$ given by $t\mapsto1,A\mapsto 0$, with kernel $\cdots *_C A *_C A *_C A *_C \cdots$. Again there is a normal form for $A*_C$, see~\cite[Theorem~IV.2.1]{LS77}: choosing as before transversals $T_-\ni1,T_+\ni1$ of both imbeddings of $C$ in $A$, every $g\in A*_C$ can be uniquely represented as
\begin{equation}\label{eq:hnn}
  g=g_0 t^{\varepsilon_1}g_1\cdots t^{\varepsilon_n}g_n
\end{equation}
with $g_0\in A$, $\varepsilon_i\in\{\pm1\}$, $g_i\in T_{\varepsilon_i}$ and no consecutive $t^{\pm1}1t^{\mp1}$.

If again $A$ is finite, then $G\coloneqq A*_C$ is virtually free, its Cayley graph is quasi-isometric to a tree, and there is a natural tree on which $G$ acts with finite stabilizers, its \emph{Bass-Serre tree}. Its vertices are right cosets of $A$ in $G$, its edges are right cosets of $C$ and $C^t$, with the edge $C g$ connecting $A g$ to $A t^{-1}g$ and the edge $C^t g$ connecting $A g$ to $A t g$, and the action is given by right multiplication.

By classical results of Baumslag and Tretkoff~\cite{Ba63,Tr73,BaTr78}, the groups $A*_C B$ and $A*_C$ are residually finite as soon as $A,B$ are finite.

\subsection{A tower of HNN extensions}
Our basic construction is as follows. Start by $A_{-1}=1$ and $A_0$ any finite group. Then, inductively: given two imbeddings of $A_{n-2}$ in $A_{n-1}$, let $A_n$ be a finite quotient of $A_{n-1}*_{A_{n-2}} A_{n-1}$ in which both $A_{n-1}$ imbed, and proceed with these two imbeddings. The art, in the construction, is to choose the quotient $A_n$ appropriately.

Let $G_n$ denote the HNN extension $A_n*_{A_{n-1}}$. The inclusion in the first factor induces homomorphisms $A_{n-1}\hookrightarrow A_n$ compatible with the inclusions of both copies of $A_{n-2}$, and therefore induces homomorphisms $\pi_n\colon G_{n-1}\to G_n$. Let $G$ denote the colimit of $(G_0\to G_1\to\cdots)$.

\begin{lemma}\label{lem:generate}
  $G$ is generated by $A_0\cup\{t\}$.
\end{lemma}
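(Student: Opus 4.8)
The plan is to show by induction on $n$ that the image of $G_n$ in the colimit $G$ is generated by $A_0\cup\{t\}$; since $G=\bigcup_n \pi_{\infty,n}(G_n)$ (where $\pi_{\infty,n}\colon G_n\to G$ denotes the canonical map), this suffices. The base case $n=0$ is immediate, as $G_0=A_0*_{A_{-1}}=A_0*_{1}=A_0*\langle t\rangle$ is generated by $A_0\cup\{t\}$ already. For the inductive step, the key observation is that although $G_n$ is the HNN extension $A_n*_{A_{n-1}}$ and hence a priori needs all of $A_n$ (not just $A_{n-1}$) together with its stable letter to be generated, the colimit map $\pi_{\infty,n}$ collapses this: I claim that in $G$, the image of $A_n$ is contained in the subgroup generated by the image of $A_{n-1}$ and the image of the stable letter of $G_n$.

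The crux is therefore to understand how consecutive stable letters interact in the colimit. Write $t_n$ for the stable letter of $G_n=A_n*_{A_{n-1}}$, so that $t_n$ conjugates the two given imbeddings $\iota_0,\iota_1\colon A_{n-1}\hookrightarrow A_n$ (used in forming $A_n$ as a quotient of $A_{n-1}*_{A_{n-2}}A_{n-1}$) into each other. The point is that, by construction, these same two imbeddings are the data defining the next HNN extension $G_{n+1}=A_{n+1}*_{A_n}$, and the homomorphism $\pi_{n+1}\colon G_n\to G_{n+1}$ sends $t_n$ to the stable letter $t_{n+1}$ of $G_{n+1}$ — wait, more carefully: $\pi_{n+1}$ is induced by $A_n\hookrightarrow A_{n+1}$ together with $t_n\mapsto t_{n+1}$, and one checks this respects the HNN relations precisely because $t_{n+1}$ conjugates the two imbeddings of $A_{n-1}$ inside $A_{n+1}$ (factoring through the two imbeddings of $A_n$), which agree with $\iota_0,\iota_1$ after composing with $A_{n-1}\hookrightarrow A_n\hookrightarrow A_{n+1}$. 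Hence all the stable letters $t_n$ have the same image $t\coloneqq\pi_{\infty,0}(t_0)$ in $G$. Now, inside $G$, consider an arbitrary $a\in A_n$. Using the normal form~\eqref{eq:hnn} in $A_n*_{A_{n-1}}$ applied to $t_n a t_n^{-1}$ (or rather: using that $A_n$ is generated by $A_{n-1}$ together with conjugates $t_n^{\pm1}A_{n-1}t_n^{\mp1}$, which holds because $A_n$ is a quotient of $A_{n-1}*_{A_{n-2}}A_{n-1}$ and the two factors are $A_{n-1}$ and $t_n A_{n-1}t_n^{-1}$ modulo the HNN relation inside $G_n$), we get that the image of $A_n$ in $G$ lies in $\langle \pi_{\infty,n}(A_{n-1}), t\rangle$. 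By the induction hypothesis $\pi_{\infty,n}(A_{n-1})=\pi_{\infty,n-1}(A_{n-1})\subseteq\langle A_0,t\rangle$, so $\pi_{\infty,n}(A_n)\subseteq\langle A_0,t\rangle$, and since $\pi_{\infty,n}(G_n)=\langle\pi_{\infty,n}(A_n),t\rangle$ this closes the induction.

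The main obstacle I anticipate is bookkeeping the identification of stable letters across the tower: one must verify that the two imbeddings $A_{n-2}\hookrightarrow A_{n-1}$ used at stage $n-1$ are genuinely ``the same'' (after the relevant inclusions) as the pair of imbeddings of $A_{n-1}\hookrightarrow A_n$ restricted along $A_{n-2}\hookrightarrow A_{n-1}$ — this is exactly the compatibility the construction builds in (``the inclusion in the first factor induces homomorphisms $A_{n-1}\hookrightarrow A_n$ compatible with the inclusions of both copies of $A_{n-2}$''), so the content is just to phrase it precisely enough that the homomorphisms $\pi_n$ are well-defined and carry $t_{n-1}\mapsto t_n$. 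Once that is pinned down, the generation statement is a short consequence of the HNN normal form and the fact that $A_n$ is a quotient of an amalgam of two copies of $A_{n-1}$. I would present the identification of stable letters as a separate sentence or small claim, then give the inductive generation argument in two lines.
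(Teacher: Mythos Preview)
Your proposal is correct and follows essentially the same approach as the paper: the key observation in both is that $A_n$, being a quotient of $A_{n-1}*_{A_{n-2}}A_{n-1}$, is generated by its two copies of $A_{n-1}$, and inside $G_n$ these two copies are conjugate by the stable letter $t$, so $G_n=\langle A_{n-1},t\rangle$. The paper packages this as ``$\pi_n$ is onto for all $n$'' rather than inducting in the colimit, and it takes the identification of all stable letters for granted (writing a single $t$ throughout), whereas you spell this out; but the mathematical content is identical.
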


\begin{proof}
  It suffices to prove that $\pi_n$ is onto for all $n\ge1$. Now $G_n$ is generated by $A_n\cup\{t\}$ and therefore by $A_{n-1}\cup A_{n-1}'\cup\{t\}$ where $A_{n-1}'$ is the other copy of $A_{n-1}$ in $A_n$; and these two copies are conjugate by $t$ so $G_n$ is generated by $A_{n-1}\cup\{t\}$, which are the generators of $G_{n-1}$.
\end{proof}

\begin{lemma}
  If the $A_n$ are not eventually constant (and therefore increase unboundedly, since they are subgroups of each other), then $G$ is not virtually free.
\end{lemma}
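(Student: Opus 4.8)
The plan is to locate inside $G$ finite subgroups of unbounded order, which no virtually free group admits. Indeed, if a group $H$ has a torsion-free subgroup of finite index (e.g.\ a free one), then replacing that subgroup by the intersection $N$ of its finitely many conjugates yields a \emph{normal}, torsion-free, finite-index subgroup, and every finite subgroup of $H$ meets $N$ trivially, hence embeds in the finite group $H/N$; so finite subgroups of $H$ have order at most $[H:N]$.

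The real content is therefore to check that each $A_n$ embeds in $G$. By construction the map $A_{n-1}\hookrightarrow A_n$ (inclusion into the first factor of $A_{n-1}*_{A_{n-2}}A_{n-1}$, followed by the quotient onto $A_n$) is injective, and it coincides with the restriction of $\pi_n\colon G_{n-1}\to G_n$ to $A_{n-1}\subseteq G_{n-1}$. Since the base group of an HNN extension embeds into it, $A_N\hookrightarrow G_N$ for every $N$; composing, the structure map $A_n\to G_N$ for $N\ge n$ factors as $A_n\hookrightarrow A_{n+1}\hookrightarrow\dots\hookrightarrow A_N\hookrightarrow G_N$ and is injective. Now an element of the colimit $G=\varinjlim_n G_n$ is trivial if and only if it is already trivial at some finite stage $G_N$; hence an element of $A_n$ becoming trivial in $G$ would already be trivial in some $A_N$, so it was trivial to begin with. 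Thus $A_n\hookrightarrow G$ for all $n$.

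To finish, note that the sequence $(A_n)$ is nondecreasing (each $A_n\le A_{n+1}$) and, by hypothesis, not eventually constant, so $|A_n|\to\infty$; hence $G$ contains finite subgroups of arbitrarily large order and is not virtually free. ($G$ is finitely generated by Lemma~\ref{lem:generate}, though the argument above does not actually use this.)

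The only step that calls for genuine care — and hence the ``main obstacle'', modest as it is — is the passage to the colimit: one must rule out any collapsing, i.e.\ verify that each $A_n$ embeds in $G$ itself and not merely in the successive groups $G_N$. Everything else is standard structure theory of virtually free groups.
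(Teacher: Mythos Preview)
Your proof is correct and follows exactly the paper's approach: virtually free groups have a bound on the orders of their finite subgroups, and the $A_n$ provide finite subgroups of $G$ of unbounded order. The paper's own proof is a one-line appeal to this fact, so you have simply filled in the details (the normal-core argument and the verification that each $A_n$ actually embeds in the colimit~$G$) that the paper leaves implicit.
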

\begin{proof}
  If indeed $G$ were virtually free, it would have a bound on the cardinality of its finite subgroups.
\end{proof}

There are quite a few natural examples of sequences of groups $A_n$, with isomorphisms $\alpha_n\colon A_{n-1}\to A_{n-1}'$ between two subgroups of $A_n$:
\begin{itemize}
\item $A_n=(\Z/2)^n$ and $\alpha_n\colon (\Z/2)^{n-1}\times1\to1\times (\Z/2)^{n-1}$. The resulting group $G$ is the ``lamplighter group'' $\Z/2\wr\Z$;
\item $A_n=S_n$ and $\alpha_n\colon S_{\{1,\dots,n-1\}}\to S_{\{2,\dots,n\}}$. The resulting group $G$ is the group of permutations of $\Z$ that act as a translation away from the origin;
\item $A_n=GL(n,\mathbb F_p)$ and $\alpha_n\colon GL(n,\mathbb F_p)\times1\to1\times GL(n,\mathbb F_p)$. The resulting group $G$ is the group of invertible $\mathbb F_p$-linear maps of $\mathbb F_p[t,t^{-1}]$ that act as a power of $t$ on large enough powers of $t$.
\end{itemize}

\begin{lemma}\label{lem:locfinite}
  For every finite $I\subset\Z$ and every $r\in\N$, the following holds for all $n\in\N$ large enough. Consider the Cayley graph of $G_n$ with generating set $B_{G_n}(r)$, and its subgraph $\mathcal C_{r,I}$ spanned by all elements whose image in $\Z$ belongs to $I$. Then all connected components of $\mathcal C$ are finite, and of bounded cardinality.
\end{lemma}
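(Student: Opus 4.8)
The plan is to equip $G_n$ with the generating set $S=\{t\}\cup A_0$ furnished by Lemma~\ref{lem:generate}, to put $M:=r+\max_{i\in I}|i|$, and to prove that any connected component $K$ of $\mathcal C_{r,I}$ satisfies
\[
  K^{-1}K\ \subseteq\ H^{(n)}\cdot\{t^{p}:|p|\le 2M\},\qquad\text{where}\qquad H^{(n)}:=\big\langle\,t^{j}A_0t^{-j}:|j|\le 2M\,\big\rangle\le G_n ;
\]
it then remains to bound $|H^{(n)}|$ independently of $n$. The displayed inclusion is soft. If $x,y\in K$ they are joined by a chain of elements of $\pi^{-1}(I)$ at pairwise $S$-distance $\le r$, hence by an $S$-path whose every vertex $z$ has $\pi(z)\in[-M,M]$. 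Write this path as a word $s_1\cdots s_\ell$, each generator $s_i$ being uniquely of the form $t^{\varepsilon_i}a_i$ with $\varepsilon_i\in\{0,\pm 1\}$ and $a_i\in A_0$, and put $p_i:=\varepsilon_1+\cdots+\varepsilon_i$. Since $\pi(x)+p_i\in[-M,M]$ while $\pi(x)\in I$, we get $|p_i|\le 2M$, and the telescoping identity
\[
  s_1\cdots s_\ell\;=\;\Big(\prod_{i=1}^{\ell}t^{p_i}a_it^{-p_i}\Big)\,t^{p_\ell}
\]
expresses $x^{-1}y$ as an element of the right-hand side above (note $|p_\ell|=|\pi(y)-\pi(x)|\le 2M$). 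In particular $|K|\le|H^{(n)}|\cdot(4M+1)$.

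The crux is a structural claim about the tower: \emph{for all $k\ge 0$ and all $n\ge k$, one has $t^{k}A_0t^{-k}\le A_n$ inside $G_n$.} I would prove this by induction on $k$. The case $k=0$ is the chain of first-factor embeddings $A_0\le A_1\le\cdots\le A_n$. For the step, recall that $\pi_n\colon G_{n-1}\to G_n$ carries the stable letter to the stable letter and $A_{n-1}$ onto the first embedded copy of $A_{n-1}$ in $A_n$, while in $G_n=A_n*_{A_{n-1}}$ the stable letter conjugates that first copy of $A_{n-1}$ onto the second copy, both copies being contained in $A_n$. So if $t^{k-1}A_0t^{-(k-1)}\le A_{n-1}$ holds in $G_{n-1}$ (permissible since $n-1\ge k-1$), then applying $\pi_n$ puts $t^{k-1}A_0t^{-(k-1)}$ inside the first copy of $A_{n-1}$, hence inside $A_n$, and conjugating by $t$ puts $t^{k}A_0t^{-k}$ inside the second copy, again inside $A_n$. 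The one genuinely delicate point I anticipate is making sure the construction's conventions — the identification of the stable letters along the tower, and the precise form of the embeddings underlying $\pi_n$ — are exactly as used here; everything else is bookkeeping. (It is also precisely this step that uses the generating set $\{t\}\cup A_0$ and not $\{t\}\cup A_n$: for the latter the analogous group $\langle t^{j}A_nt^{-j}:|j|\le 2M\rangle$ is an infinite amalgam of copies of $A_n$, and the lemma is false.)

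Granting the claim, suppose $n\ge 4M$. Then $t^{2M}H^{(n)}t^{-2M}=\langle t^{k}A_0t^{-k}:0\le k\le 4M\rangle\le A_n$, so $H^{(n)}$ is finite. Furthermore, for every $m\ge 4M$ the homomorphism $\pi_{m+1}$ restricts to the embedding $A_m\hookrightarrow A_{m+1}$ and sends each generator $t^{k}A_0t^{-k}$ of $\langle t^{k}A_0t^{-k}:0\le k\le 4M\rangle\le A_m$ to the corresponding generator $t^{k}A_0t^{-k}$ of the same subgroup computed in $G_{m+1}$; hence that subgroup, and with it $H^{(n)}$, has one and the same finite cardinality $h=h(r,I)$ for every $n\ge 4M$ (equivalently: along the colimit maps these subgroups stabilise inside the locally finite group $\bigcup_m A_m\le G$). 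Combining this with the first paragraph: for every finite $I\subset\Z$, every $r$, and every $n\ge 4\bigl(r+\max_{i\in I}|i|\bigr)$, each connected component of $\mathcal C_{r,I}$ has cardinality at most $h(r,I)\cdot(4M+1)$, a bound that does not depend on $n$.
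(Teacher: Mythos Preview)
Your argument is correct and follows essentially the same route as the paper: rewrite a path constrained to a bounded $\Z$-strip as a product of $t$-conjugates of $A_0$ with bounded exponents, and observe that all such conjugates land in $A_n$ once $n$ is large enough. The paper compresses this into two lines (and with an unfortunate reuse of the letter $n$ for both the group index and the diameter of $I'$), whereas you spell out the telescoping identity and prove the key inclusion $t^kA_0t^{-k}\le A_n$ for $n\ge k$ by an explicit induction along the tower. Your additional observation that the bound $|H^{(n)}|$ stabilises for $n\ge 4M$ (because the $\pi_m$ are injective on $A_{m-1}$) is a mild strengthening that the paper does not state, but is not needed for the lemma as phrased.
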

\begin{proof}
  We may replace $I$ by the interval $I'\coloneqq[\min I-r,\max I+r]$ and consider the subgraph $\mathcal C_{1,I'}$ of the Cayley graph of $G_n$ with generating set $A_0\sqcup\{t\}$. Let $n$ denote the diameter of $I'$. Consider $x,y\in\mathcal C_{1,I'}$ that are connected by a path $g$, seen as an element of $G_0=A_0*\langle t\rangle$. Because of the constraints to its image in $\Z$, we may write $g$ in the form
  \[g = t^{i_0}g_1^{t^{i_1}}\cdots g_k^{t^{i_k}}t^{i_{k+1}}\]
  where $-n\le i_0,i_{k+1}\le n$ and $0\le i_1,\dots,i_k\le n$. Then in $G_n$ we have $g= t^{i_0}g' t^{i_{k+1}}$ with $g'\in A_n$, a finite group. Therefore the number of vertices of $\mathcal C_{1,I'}$ reachable from $x$ is at most $4n^2\#A_n$.
\end{proof}

\subsection{Marked groups}\label{ss:markedgroups}
We consider groups generated by a fixed finite set $S$. These appear in the literature as \emph{marked groups}, or quotients of a fixed free group $F_S$. We denote by $B_G(r)$ the ball of radius $r$ in a marked group $G$. For example, if $S=A_0\cup\{t,t^{-1}\}$, then by Lemma~\ref{lem:generate} all $G_n$ are marked groups.

The \emph{space of marked groups}, introduced by Grigorchuk and Gromov, consists in the set $\mathscr G_S$ of marked groups, topologized by declaring two marked groups $G,H$ to be close if they agree on a large ball, namely if $B_G(r)$ and $B_H(r)$ are isomorphic as $S$-labeled graphs for some large $r$. See~\cite{champetier:marked} for a good survey.

\begin{lemma}\label{lem:converge}
  The groups $G_n$ above converge, in the space of marked groups, to $G$.

  Furthermore, this convergence may be made as fast as desired: for any function $\Phi\colon\mathscr G_S\to\N$ the sequence $G_n$ may be chosen in such a manner that for all $n$ the balls of radius $\Phi(G_n)$ coincide in $G_n$ and $G_{n+1}$.
\end{lemma}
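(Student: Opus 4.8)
The plan is to prove the two assertions separately: the first is a soft consequence of $G$ being a colimit of surjections, while the second exploits the freedom in choosing the $A_n$ that residual finiteness affords. For convergence, mark every group by $S=A_0\cup\{t,t^{-1}\}$ — legitimate for all $G_n$ by Lemma~\ref{lem:generate}, and for $G$ by the colimit maps $G_n\to G$. By the proof of Lemma~\ref{lem:generate} each $\pi_{n+1}\colon G_n\to G_{n+1}$ is onto and carries generators to generators, so the subgroups $\ker(F_S\to G_n)$ form an increasing chain in $F_S$; and since $G$ is the colimit of a system of surjections, $\ker(F_S\to G)=\bigcup_n\ker(F_S\to G_n)$ (an element of $F_S$ dies in $G$ iff it already dies in some $G_n$). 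Intersecting this increasing union with the finite set of words of length $\le 2r+1$ shows that for every $r$ the relators of length $\le 2r+1$ in $G_n$ eventually coincide with those in $G$; as the $S$-labeled graph $B_\bullet(r)$ is determined by these relators, $B_{G_n}(r)\cong B_G(r)$ for $n$ large, which is convergence in $\mathscr G_S$. This step uses nothing about the particular $A_n$.

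For the prescribed rate I would build $A_0,A_1,\dots$ recursively so that $B_{G_n}(\Phi(G_n))=B_{G_{n+1}}(\Phi(G_n))$ for every $n$; the first part then yields $G_n\to G$ automatically. Suppose $A_0,\dots,A_n$, hence $G_n$, have been fixed; put $r:=\Phi(G_n)$ and let $\Gamma:=A_n*_{A_{n-1}}A_n$ be the amalgam of which $A_{n+1}$ must be a finite quotient. The key structural fact is that the ``naive'' (illegitimate) choice $A_{n+1}=\Gamma$ reproduces $G_n$: the group $\Gamma*_{A_n}$ is isomorphic \emph{as a marked group} to $G_n$. Indeed, unrolling the HNN extension gives $\ker(\Gamma*_{A_n}\to\Z)=\cdots*_{A_n}\Gamma*_{A_n}\Gamma*_{A_n}\cdots$, and since the two imbeddings defining $\Gamma$ are staggered precisely so as to match the HNN relation of $G_n$, associativity of amalgamated products collapses this to $\cdots*_{A_{n-1}}A_n*_{A_{n-1}}A_n*_{A_{n-1}}\cdots=\ker(G_n\to\Z)$, compatibly with the $\Z$-actions.

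Now $\Gamma$ is residually finite (an amalgam of finite groups, by the Baumslag--Tretkoff results quoted above), so I may pick a descending chain $N_1\supseteq N_2\supseteq\cdots$ of finite-index normal subgroups of $\Gamma$ with $\bigcap_k N_k=\{1\}$ and $N_k\cap A_n^{(j)}=\{1\}$ for both imbedded copies $A_n^{(j)}\le\Gamma$ (take a residually-finite chain and refine it below a finite-index subgroup meeting $A_n^{(1)}\cup A_n^{(2)}$ only in $1$). Then each $A_{n+1}^{(k)}:=\Gamma/N_k$ is a legal choice of $A_{n+1}$, and I claim $G_{n+1}^{(k)}:=A_{n+1}^{(k)}*_{A_n}$ converges to $\Gamma*_{A_n}=G_n$ as $k\to\infty$. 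This rests on continuity of the operation $H\mapsto H*_{A_n}$, the HNN extension over the fixed finite group $A_n$ with its fixed imbeddings: reading off the normal form~\eqref{eq:hnn}, whether two $S$-words of length $\le L$ are equal in $H*_{A_n}$ depends only on multiplication of $H$ on a ball whose radius is bounded in terms of $L$ and $A_n$, so $\Gamma/N_k\to\Gamma$ entails $(\Gamma/N_k)*_{A_n}\to\Gamma*_{A_n}$. I then choose $k$ with $B_{G_{n+1}^{(k)}}(r)=B_{G_n}(r)$ and set $A_{n+1}:=A_{n+1}^{(k)}$, completing the recursion.

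The main obstacle lies in the two structural inputs above: the marked-group identification $\Gamma*_{A_n}\cong G_n$, which forces one to track precisely how the two imbeddings of $A_{n-1}$ in $A_n$, and then of $A_n$ in $\Gamma$, are staggered so that the HNN relation survives the collapse of the tower; and the continuity of $\bullet*_{A_n}$. Both are routine Bass--Serre / normal-form computations, but that is where the care is needed — everything else (the colimit argument, the residual-finiteness bookkeeping) is formal.
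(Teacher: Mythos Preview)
Your proposal is correct and follows essentially the same line as the paper's proof: the key isomorphism $(A_n*_{A_{n-1}}A_n)*_{A_n}\cong G_n$, residual finiteness of the amalgam to approximate it by finite $A_{n+1}$, and continuity of the HNN extension via the normal form~\eqref{eq:hnn} are exactly the ingredients the paper uses for the quantitative part. The only difference is in the first assertion: you give a soft colimit/increasing-kernels argument, whereas the paper extracts an explicit rate (the $n/2$-ball stabilizes from $G_n$ on) directly from the HNN normal form; both are valid and the distinction is immaterial.
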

\begin{proof}
  Every $A_n$ is a subgroup of $G$. The normal form for HNN extensions implies that every product of $A_0$ and $t,t^{-1}$ involving at most $n/2$ copies of $t$ remains in $A_n$, so the ball of radius $n/2$ remains constant from $G_n$ on.

  Let us return to the construction, to prove the quantitative version. We start with the $S$-marked group $G_0=A_0*\langle t\rangle$, with $S=A_0\sqcup\{t\}$. Then, inductively, we have
  \[G_n=A_n*_{A_{n-1}} \cong (A_n*_{A_{n-1}}A_n)*_{A_n},\]
  since in the latter group both of its $A_n$'s are identified by the HNN extension and there remains the identification of their $A_{n-1}$'s as in the former group.

  To construct $G_{n+1}$, we quotient $A_n*_{A_{n-1}}A_n$ above into a finite group $A_{n+1}$. For now, both of these groups are generated by $A_0\sqcup A_0$. Residual finiteness of free products with amalgation implies that there are sequences of such $A_{n+1}$ that converge to $A_n*_{A_{n-1}}A_n$ in $\mathscr G_{A_0\sqcup A_0}$. Now the normal form~\eqref{eq:hnn} implies the convergence of $G_{n+1}$ to $G_n$, so they can be made to agree on a large ball of radius $\Phi(G_n)$. In essence, this is continuity of the HNN extension construction in $\mathscr G_S$.  
\end{proof}

\section{Tree automata}
We briefly recall here some basics about infinite tree languages; see~\cite[Part~II]{Thomas90} for a good introduction.

Fix once and for all a finite set $S$, and a regular prefix-closed subset $W$ of $S^*$; here ``regular'' means that there is a finite state automaton accepting $W$. We naturally view $W$ as a tree, with an edge between $w$ and $w s$ for all $w\in W,s\in S$ such that $w s\in W$.

\begin{definition}[Tree language]
  Consider a finite alphabet $\Sigma$; then a \emph{tree language} is a subset $\mathfrak L$ of $\Sigma^W$, namely consists of $\Sigma$-vertex labelings of the tree $W$.
\end{definition}
It may psychologically be preferable to replace $W$ by the full tree $S^*$, and to view as tree language a subset of $\Sigma^{S^*}$; this may be achieved by replacing $\Sigma$ by $\Sigma\sqcup\{\mathvisiblespace\}$ and extending $x\in\Sigma^W$ to $x\colon S^*\to\Sigma\sqcup\{\mathvisiblespace\}$ by giving it the value $\mathvisiblespace$ outside $W$.

\begin{definition}[Rabin automaton]
  A \emph{Rabin automaton} $\mathscr A$ consists of a finite set $Q$, the \emph{stateset}, a \emph{start state} $q_0\in Q$, \emph{productions} $\Pi\subseteq Q\times\Sigma\times(Q\cup\{\dagger\})^S$, where a production is written $(q,\sigma)\to(q_s)_{s\in S'\subseteq S}$ keeping on the right all coordinates $\neq\dagger$, and a collection $(N_1,P_1),\dots,(N_k,P_k)$ of pairs of subsets of $Q$ called \emph{negative} and \emph{positive acceptance constraints}.

  A \emph{successful run} of $\mathscr A$ is a $Q\times\Sigma$-vertex labeling $r$ of $W$ such that the first ($Q$-) coordinate $x(1)_1$ of the root is $q_0$, at every $w\in W\subset S^*$ with descendants $w s_1,\dots,w s_k\in W$ the labels satisfy $(x(w),\{s\mapsto w(x s)_1\text{ or }\dagger\})\in\Pi$, and for every infinite ray $\xi\in S^\omega$ in $W$ there is an acceptance constraint $(P_i,N_i)$ such that the set of states $r(\xi_1\ldots\xi_n)_1$ that appear infinitely many times has trivial intersection with $N_i$ and non-trivial intersection with $P_i$.

  A language $\mathfrak L$ is \emph{regular} if it is the set of projections to $\Sigma^W$ of all successful runs of a Rabin automaton.
\end{definition}

The assumption that $W$ is a regular, prefix-closed language is equivalent to the one-element unary language over $W$, with $\Sigma=\{\cdot\}$, being regular: simply take for $Q$ the stateset of an automaton recognizing $W$, and as productions all $(q,\cdot,\{s\mapsto q\cdot s\text{ or }\dagger\})$.

The main properties of regular tree languages are that (1) they form an effective Boolean algebra: there are algorithms that, from Rabin automata defining languages $\mathfrak L,\mathfrak M$, produce Rabin automata defining $\mathfrak L^\complement,\mathfrak L\cap\mathfrak M,\mathfrak L\cup\mathfrak M$; and (2) emptiness is decidable: there is an algorithm that, given a Rabin automaton, decides whether it accepts at least one tree.

The proof of decidability of monadic second-order logic on regular trees, and therefore on virtually free groups, in fact proceeds as follows. Firstly, every monadic second-order statement can be converted into a Rabin automaton, in such a manner that the associated language is non-empty if and only if the statement is true; then emptiness of regular tree languages is shown to be decidable.

We include, in brief form, a proof of the following fundamental result, since we shall need a small extra property that can be extracted from the proof. Recall that a \emph{finite state automaton with output $\Sigma$}, for an output alphabet $\Sigma$, is a collection of finite state automata $(\mathscr A_\sigma)_{\sigma\in\Sigma}$ accepting disjoint languages; such a device defines a partial map $x\colon S^*\dashrightarrow\Sigma$ by $x(w)=\sigma$ whenever $\mathscr A_\sigma$ accepts $w$:
\begin{theorem}[Rabin, see~{\cite[Theorem~9.3]{Thomas90}}]\label{thm:rabin}
  If $\mathfrak L$ is a non-empty regular tree language, then $\mathfrak L$ contains a \emph{regular} tree, namely a labeling $x\colon W\to\Sigma$ that is given by a classical finite state automaton with alphabet $S$ and output $\Sigma$.
\end{theorem}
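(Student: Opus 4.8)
The plan is to recast emptiness of a Rabin automaton as an infinite two-player game on a \emph{finite} arena, and then to exploit the fact that the player aiming for the Rabin condition, when she wins, wins with a strategy of finite (indeed zero) memory.

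First I would set up the \emph{acceptance game} for a Rabin automaton $\mathscr A$ recognising $\mathfrak L$. Fix a deterministic finite automaton $\mathscr W$ recognising $W$, with stateset $Q_W$ and transitions $p\mapsto p\cdot s$; we may assume every state of $\mathscr W$ is reachable and lies on some accepting path. A position of the game is a pair $(p,q)\in Q_W\times Q$, to be read as ``we sit at some node $w$ of $W$, the $\mathscr W$-state after reading $w$ is $p$, and $\mathscr A$ has put state $q$ on $w$''. At $(p,q)$ the player \emph{Automaton} picks a letter $\sigma\in\Sigma$ and a production $(q,\sigma)\to(q_s)_{s\in S'}$ of $\mathscr A$ whose domain $S'$ is exactly $\{s\in S:p\cdot s\in Q_W\}$; then the player \emph{Pathfinder} picks one $s\in S'$, and the play passes to $(p\cdot s,q_s)$ (if $S'=\emptyset$ the node is a leaf of $W$, the play ends, and we declare Automaton to win it). An infinite play runs through an infinite sequence of $Q$-states, and Automaton wins it when the set of states occurring infinitely often is disjoint from some $N_i$ and meets the corresponding $P_i$ — exactly the acceptance condition of $\mathscr A$. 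A routine unfolding argument then shows that Automaton has a winning strategy from $(q_0$'s position$)$ if and only if $\mathfrak L\neq\emptyset$: a successful run gives a winning strategy, and conversely any strategy, played simultaneously against \emph{all} of Pathfinder's moves, produces a $Q\times\Sigma$-labelling of $W$ which is a successful run precisely when that strategy is winning.

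Second — the heart of the matter — I would invoke memoryless determinacy for the Rabin side: on a finite arena with a Rabin winning condition, whenever the player owning that condition has a winning strategy she has a \emph{positional} one, depending on the current position alone (Emerson--Jutla; alternatively, reduce the Rabin condition to a parity condition via an index-appearance record and quote positional determinacy of parity games, or simply use finite-memory determinacy of Muller games after Gurevich--Harrington — any of these is enough for us). Since $\mathfrak L\neq\emptyset$, Automaton wins, so she has a positional winning strategy $\tau$ defined on the finite set $Q_W\times Q$. Then I would read the regular tree off $\tau$: because $\tau$ is positional, the letter and production it prescribes at a node $w\in W$ depend only on the position at $w$, and that position is itself a deterministic finite-state function of the word $w$, obtained by running $\mathscr W$ on $w$ while updating the $Q$-coordinate according to $\tau$ and the letters of $w$. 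Hence the map sending $w$ to the letter $\tau$ prescribes at $w$ is computed by a finite-state automaton with stateset $Q_W\times Q$ and output $\Sigma$; let $x\colon W\to\Sigma$ be the resulting labelling and $r\colon W\to Q\times\Sigma$ the accompanying run, finite-state for the same reason. That $r$ is a \emph{successful} run — in particular that along every infinite ray of $W$ some acceptance constraint is met — is exactly the statement that $\tau$ wins against every sequence of Pathfinder's moves. Therefore $x\in\mathfrak L$ and $x$ is regular in the stated sense; and the ``small extra property'' promised in the statement drops out, namely that the witnessing output automaton may be taken with stateset $Q_W\times Q$ (times a fixed finite memory set, should one use only finite-memory rather than positional determinacy).

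The step I expect to be the main obstacle is the precise formulation of the acceptance game: productions assign states to all children of a node at once rather than along a single branch, and the admissible branchings at a node are dictated by $W$, so the arena and the move alternation must be laid out with care; and correspondingly one must check that a positional winning strategy produces a labelling of exactly the required syntactic shape (a finite-state automaton with output, as defined in the text) \emph{together with} a finite-state run, so that the acceptance condition can legitimately be verified ray by ray. The determinacy theorem itself enters only as a black box; the surrounding bookkeeping is routine but has to be spelled out to harvest the extra uniformity of the witness automaton.
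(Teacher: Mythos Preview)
Your proof is correct and is in fact the standard modern route to Rabin's basis theorem: reduce emptiness to a two-player game on the finite arena $Q_W\times Q$, invoke Emerson--Jutla positional determinacy for the Rabin player, and read a regular tree off the positional winning strategy. The paper takes a different, more classical path: it argues by induction on the number of \emph{live} states (states other than $q_0$ that have some transition besides the constant one), and in the main case explicitly \emph{chooses a path} $\xi_0$ along which all live states recur, selects a state $q\in P_i\cap Q_\infty$ on that path, and builds the regular tree by stitching $\omega$ many copies of a partial run along $\xi_0$.

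The difference matters for the paper's downstream use. You identify the ``small extra property'' as a bound on the stateset of the output automaton, but that is not what the paper extracts. What it actually needs, and invokes verbatim in the proof of Proposition~\ref{prop:PeriodicSnake}, is the freedom to \emph{choose the path $\xi_0$}: when the accepted tree carries a snake with unbounded $\Z$-projection, the paper chooses $\xi_0$ to have unbounded $\Z$-projection too, so that the stitched regular tree inherits a snake whose $\Z$-image is infinite. Your positional strategy hands you \emph{a} regular tree, but gives no handle on which branch the eventual periodicity lives on; the game does not see the $\Z$-projection at all, so nothing prevents the strategy from producing a regular tree whose snake stays $\Z$-bounded even when unbounded snakes exist. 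Your argument therefore proves the theorem as stated but does not deliver the refinement the paper later relies on; to recover it via games one would have to enlarge the arena (e.g.\ by tracking a bounded window of the $\Z$-coordinate, or by encoding ``eventually leave every bounded $\Z$-interval'' into the winning condition), which is doable but is additional work you have not done.
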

\begin{proof}[Sketch of proof]
  Let $\mathscr A$ be a Rabin automaton recognizing $\mathfrak L$. First, without loss of generality we may assume that $q_0$ never occurs in right sides of productions, and $\Sigma=\{\cdot\}$; this last step is achieved by replacing $Q$ with $Q\times\Sigma$, and the regular tree we are seeking is now a regular run $x\colon W\to Q$.
  
  A state $q\in Q$ is called \emph{live} unless $q=q_0$ or there is a single transition $(q,(q,\dots,q))$.

  Let $r\colon W\to Q$ be a successful run. The proof proceeds by induction on the number of live states in $r$. If there are none, then $r$ is constant and we are done. If some live state is missing from $r$, then $r$ will be accepted by an automaton with fewer live states, namely $\mathscr A$ with $q$ missing. Similarly, if there is a vertex $w$ such that $r(w)=q$ is live but some live state $q'$ does not appear beyond $w$ in $r$, then there are partial runs $r_1,r_2$ defined by ``running till state $q$ is reached'' on the modification of $\mathscr A$ in which $q$ is made non-live (by only allowing $(q,(q,\dots,q))$ as transitions out of $q$), respectively ``starting at state $q$ and never reaching $q'$''  on the automaton in which $q$ is made initial and $q'$ is removed from $\mathscr A$; by induction these partial runs can be assumed to be regular, and they can be stitched back into a regular run for $\mathscr A$.

  The main case is that in which all live states appear in $r$ beyond any vertex. We \textbf{choose a path} $\xi_0$ in $W$ such that all live states appear in
  \[Q_\infty=\{\text{those states that appear infinitely often on }\xi\}.\] Since $r$ is successful, there is an acceptance constraint, say $(N_1,P_1)$, such that $Q_\infty\cap N_1=\emptyset\neq Q_\infty\cap P_1$. Choose $q\in Q_\infty\cap P_1$, and again find two regular partial runs $r_1,r_2$, with $r_1$ as above and $r_2$ defined by ``starting at state $q$ and making a copy of $q$ non-live for further visits''. Again $r_1$ and $\omega$ copies of $r_2$ can be stitched together at all occurrences of state $q$. To check that the resulting run is successful, the interesting case is on paths $\xi$ on which $q$ appears infinitely often. For such paths, $(N_1,P_1)$ is a valid acceptance constraint: firstly $q\in P_1$ so $P_1$ intersects non-trivially the recurrent states. If some $q'\in N_1$ occurred infinitely often on $\xi$, then $q'$ would be non-live and therefore the only state on $\xi$, contradicting that $q$ is live and occurs infinitely often.    
\end{proof}

\section{Fooling snakes}
Below, we consider groups generated by a fixed finite set $S$. Recall the $S$-marked groups from Section~\ref{ss:markedgroups}.

\begin{definition}[Reach]
  Let $G$ be an $S$-marked group, let $A, B$ two subsets of $G$, and let $\Theta$ be a tileset. We say that \emph{snakes can reach $B$ from $A$} if there is a finite $\Theta$-snake with left endpoint in $A$ and right endpoint in $B$. If $\pi\colon G \to K$ is an epimorphism, then for $a, b \in K$ we say that \emph{snakes can $(G, K)$-reach $b$ from $a$} if it they can reach $\pi^{-1}(b)$ from $\pi^{-1}(a)$. Again this makes sense for all types of snakes, though we only need it for strong directed snakes.
\end{definition}

The next two propositions are the fundamental properties of snakes that we shall use to prove Theorem~\ref{thm:main}.
\begin{proposition}\label{prop:Lift}
  Let $G \to H \to K$ be epimorphisms of $S$-marked groups, suppose that $\Theta = (S', C, D)$ is a directed tileset for some finite $S'\subset F_S$, and consider $a,b\in K$. If a strong directed $\Theta$-snake can $(H, K)$-reach $b$ from $a$, then a strong directed $\Theta$-snake can $(G, K)$-reach $b$ from $a$.
\end{proposition}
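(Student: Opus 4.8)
The plan is to exploit that $G \to H$ is an epimorphism of $S$-marked groups, so every relation of $H$ holds in $G$ up to the kernel, and more importantly that a finite $\Theta$-snake is a finite combinatorial object that can be ``pulled back'' along the quotient map. Concretely, suppose $\omega\colon \{0,1,\dots,n\} \to H$ together with a colouring $x$ is a strong directed $\Theta$-snake with $\pi_{HK}(\omega(0)) = a$ and $\pi_{HK}(\omega(n)) = b$, where $\pi_{HK}\colon H \to K$ is the given epimorphism. Since $\Theta$ is directed, the colour $x(\omega(i))$ records a generator $s_i \in S'$ (via its word over $S$) with $\omega(i)^{-1}\omega(i+1) = s_i$ in $H$. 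I would then define a candidate snake in $G$ by setting $\tilde\omega(0)$ to be any preimage of $a$ under the composite $G \to K$, and $\tilde\omega(i+1) = \tilde\omega(i)\cdot s_i$, reading the $s_i$ as fixed words over $S$ and hence as genuine elements of $G$ (this is where the hypothesis $S' \subset F_S$, rather than $S' \subset H$, matters: the memory set consists of actual words, so it lifts canonically).

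The two things to check are (i) that $\tilde\omega$ is injective, i.e.\ a genuine snake and not merely a walk, and (ii) that the endpoints project correctly to $a$ and $b$ in $K$. For (ii): by construction $\tilde\omega(i)^{-1}\tilde\omega(i+1) = s_i$ in $G$, so the image of $\tilde\omega(i)$ in $H$ is $\omega(0)$ left-translated — wait, more carefully, the image of $\tilde\omega(i)$ in $H$ differs from $\omega(i)$ only by the constant left factor coming from the choice of $\tilde\omega(0)$, hence the image in $K$ of $\tilde\omega(n)$ is $(\text{that factor})\cdot b$; choosing $\tilde\omega(0)$ so that this factor is trivial in $K$ — possible since $G \to K$ is onto and we only need the image of $\tilde\omega(0)$ in $K$ to be $a$, and then the image of $\tilde\omega(i)$ in $K$ is forced to equal the image of $\omega(i)$ in $K$ because both satisfy the same recursion with the same start — gives the endpoint condition. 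The colouring on $\tilde\omega$ is the transported colouring $\tilde x(\tilde\omega(i)) = x(\omega(i))$, which is legal because the directed-strong constraints only ever compare colours of the form $(\tilde\omega(i), s_i, \tilde\omega(i+1))$ and these are exactly the triples already verified in $H$ (this uses that in a \emph{directed} tileset the colour dictates the outgoing generator, so there are no extraneous strong-snake constraints between non-consecutive vertices that happen to be $S'$-adjacent in $G$ but were not in $H$ — and even if there were such extra adjacencies, the directedness trick in Lemma~\ref{lem:ReduceToStrong} shows they impose no new obstruction; but the cleanest route is to observe that a strong directed snake's legality depends only on the sequence of colours and generators along it).

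The main obstacle is (i), injectivity of $\tilde\omega$. It is conceivable that distinct $\omega(i) \neq \omega(j)$ in $H$ lift to walks that collide in $G$ — but that cannot happen, because the quotient map $G \to H$ sends $\tilde\omega(i) \mapsto$ (constant)$\cdot\omega(i)$, so $\tilde\omega(i) = \tilde\omega(j)$ in $G$ would force $\omega(i) = \omega(j)$ in $H$, contradicting injectivity of $\omega$. So in fact injectivity is automatic and goes the ``easy'' direction: the quotient can only identify points, never separate them, and we are lifting \emph{against} the quotient. Thus there is genuinely no hard case here; the proposition is a soft naturality statement, and the only thing requiring a line of care is tracking the constant left-translation factor through the two composable epimorphisms and arranging it to vanish in $K$. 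I would write this up in about a paragraph: fix the $H$-snake, define the lift by the generator-recursion, note the projection to $H$ (hence to $K$) is a left translate, choose the base preimage to kill the translate in $K$, transport the colouring verbatim, and invoke injectivity of $\omega$ pulled back through $G \to H$.

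\begin{proof}
Let $\pi_{GH}\colon G\to H$ and $\pi_{HK}\colon H\to K$ be the given epimorphisms, and write $\pi_{GK}=\pi_{HK}\circ\pi_{GH}$. Suppose $(\omega,x)$ is a strong directed $\Theta$-snake in $H$ with $\omega\colon\{0,1,\dots,n\}\to H$, $\pi_{HK}(\omega(0))=a$ and $\pi_{HK}(\omega(n))=b$. Since $\Theta$ is directed, for each $i<n$ the colour $x(\omega(i))$ determines an element $s_i\in S'\subset F_S$ with the property that $\omega(i)^{-1}\omega(i+1)=s_i$ holds in $H$ (here we read $s_i$ both as a word over $S$ and, via the marking, as an element of any $S$-marked group).

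Pick $g_0\in G$ with $\pi_{GK}(g_0)=a$; this is possible as $\pi_{GK}$ is onto. Define $\tilde\omega\colon\{0,1,\dots,n\}\to G$ by $\tilde\omega(0)=g_0$ and $\tilde\omega(i+1)=\tilde\omega(i)\,s_i$ for $i<n$, the product taken in $G$. By construction $\tilde\omega(i)^{-1}\tilde\omega(i+1)=s_i\in S'$, so $\tilde\omega$ is a walk whose consecutive displacements lie in $S'$.

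We check injectivity. Let $h_0=\pi_{GH}(g_0)$. Since $\pi_{GH}$ is a homomorphism of $S$-marked groups, it sends $s_i\in G$ to $s_i\in H$, hence $\pi_{GH}(\tilde\omega(i))=h_0\,\omega(0)^{-1}\omega(i)$ for all $i$, as both sides satisfy the same recursion with the same initial value. In particular, if $\tilde\omega(i)=\tilde\omega(j)$ in $G$ then $\omega(i)=\omega(j)$ in $H$, so $i=j$ by injectivity of $\omega$. Thus $\tilde\omega$ is an injective walk, i.e.\ a snake.

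Now transport the colouring by $\tilde x(\tilde\omega(i))\coloneqq x(\omega(i))$. Since $\tilde\omega$ is directed-compatible ($\tilde x(\tilde\omega(i))$ records $s_i=\tilde\omega(i)^{-1}\tilde\omega(i+1)$) and the legality of a strong directed snake depends only on the sequence of colours and displacements along it, $(\tilde\omega,\tilde x)$ is a strong directed $\Theta$-snake: each required triple $(\tilde x(\tilde\omega(i)),s_i,\tilde x(\tilde\omega(i+1)))$ equals the triple $(x(\omega(i)),s_i,x(\omega(i+1)))\in D$ already verified for $(\omega,x)$.

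Finally, applying $\pi_{HK}$ to the identity $\pi_{GH}(\tilde\omega(i))=h_0\,\omega(0)^{-1}\omega(i)$ gives $\pi_{GK}(\tilde\omega(i))=\pi_{HK}(h_0)\,a^{-1}\,\pi_{HK}(\omega(i))$. For $i=0$ this reads $a=\pi_{HK}(h_0)$, so the correction factor $\pi_{HK}(h_0)a^{-1}$ is trivial in $K$, and therefore $\pi_{GK}(\tilde\omega(i))=\pi_{HK}(\omega(i))$ for all $i$. In particular $\pi_{GK}(\tilde\omega(0))=a$ and $\pi_{GK}(\tilde\omega(n))=\pi_{HK}(\omega(n))=b$. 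Hence $(\tilde\omega,\tilde x)$ witnesses that a strong directed $\Theta$-snake can $(G,K)$-reach $b$ from $a$.
\end{proof}
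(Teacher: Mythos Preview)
Your approach is exactly the paper's: lift the $H$-snake to $G$ by following the directions encoded in the tiles, note that injectivity is automatic since the quotient $G\to H$ can only identify points, and observe that the endpoints still project correctly to $K$ because $H$ is intermediate.

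There is, however, one genuine gap in your formal proof. You assert that ``the legality of a strong directed snake depends only on the sequence of colours and displacements along it'', and use this to conclude that only the consecutive triples $(\tilde x(\tilde\omega(i)),s_i,\tilde x(\tilde\omega(i+1)))$ need checking. That assertion is false for a general directed tileset: the \emph{strong} condition demands $(x(g),s,x(gs))\in D$ for \emph{every} $s\in S'$ with $g,gs$ both in the range of the snake, not just for consecutive vertices, and nothing in the bare definition of ``directed'' makes those extra constraints vacuous. (Your informal appeal to the trick in Lemma~\ref{lem:ReduceToStrong} does not help either: that lemma builds a \emph{particular} directed tileset with padded dominoes, whereas the proposition is stated for an arbitrary $\Theta$.)

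The fix is a single line, and is precisely what the paper means by ``there are fewer adjacencies to be checked for validity'': if $\tilde\omega(i)^{-1}\tilde\omega(j)=s'\in S'$ holds in $G$, then applying the $S$-marked homomorphism $\pi_{GH}$ gives $\omega(i)^{-1}\omega(j)=s'$ in $H$, so this adjacency was already present for the original snake and the required triple $(x(\omega(i)),s',x(\omega(j)))\in D$ was already verified there. With this replacement your argument is complete (and you may also simplify the endpoint bookkeeping by choosing $g_0$ with $\pi_{GH}(g_0)=\omega(0)$, which kills the translate $h_0\omega(0)^{-1}$ outright).
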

\begin{proof}
  Consider a $\Theta$-snake in $H$ reaching a preimage of $b$ from a preimage of $a$. Lift this snake from $H$ to $G$ by following the directions in the tiles. The injective path is lifted to an injective path, and there are fewer adjacencies to be checked for validity. The endpoints still map to the same places in $K$, since $H$ is an intermediate quotient.
\end{proof}

\begin{definition}[Periodic snake]
  Let $\omega\colon\Z\to G$ be a snake. It is \emph{$p$-periodic}, for $p\ge1$, if there is an element $g\in G$, perforce of infinite order, such that $\omega(i+p)=\omega(i)g$ for all $i\in\Z$. We call $\omega$ \emph{periodic} if it is $p$-periodic for some $p\ge1$.

  Similarly, if $\Theta$ is a tileset, then a \emph{periodic $\Theta$-snake} is a snake obeying the tiling constraints $\Theta$, which is periodic in the above sense, and whose tiles are also periodic.
\end{definition}

\begin{proposition}\label{prop:PeriodicSnake}
  Let $G$ be a virtually free group, and suppose $\Theta = (S', C, D)$ is a directed tileset for some finite $S'\subset F_S$. If $G$ admits an infinite strong directed $\Theta$-snake, then it admits a periodic strong directed infinite snake.

  Furthermore, if $\Z$ is a quotient of $G$, and strong directed $(G,\Z)$-snakes can reach $m$ from $0$ for $|m|$ arbitrarily large, then there is a periodic snake with infinite projection to $\Z$.
\end{proposition}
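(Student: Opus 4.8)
The plan is to exploit the fact that a virtually free group acts on a tree (its Bass–Serre tree, or more invariantly the tree quasi-isometric to its Cayley graph), and to translate the existence of an infinite strong directed snake into the existence of a successful run of a Rabin automaton, so that Rabin's Theorem~\ref{thm:rabin} furnishes a \emph{regular} tree, which I will argue can be folded into a periodic snake. Concretely, first I would pass (using a finite-index free subgroup and Lemma~\ref{lem:BiggerGeneratingSet} to adjust the generating set) to a presentation in which $G$ itself, or a canonical finite cover of its Cayley graph, is a regular tree $W\subseteq S^*$ in the sense of Section~4. Because the tileset $\Theta=(S',C,D)$ has memory set $S'\subset F_S$ a bounded-size set of words, the condition ``the coordinate function $x$ describes a strong directed infinite $\Theta$-snake passing through a given vertex'' is a monadic second-order property of the labeled tree, hence is recognized by a Rabin automaton $\mathscr A$: the automaton guesses the (bi-infinite) snake as a distinguished ray/bi-ray through the tree, records the $C\times S'$-label along it, and checks the finitely many local directed-strong constraints; the acceptance condition just enforces that the guessed path is genuinely infinite (it escapes every ball), which is a simple $\omega$-regular liveness condition.

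Next, applying Theorem~\ref{thm:rabin} to the non-empty language $\mathfrak L$ of such decorated trees, I obtain a \emph{regular} solution $x\colon W\to\Sigma$, i.e.\ one computed by an ordinary finite-state automaton reading the address $w\in S^*$. The key point is that a regular labeling takes only finitely many ``values'' as one walks along the snake: if the snake visits vertices $w_0,w_1,w_2,\dots$ (addresses in $W$) then the automaton's internal state after reading $w_i$ lies in a finite set $Q$, so by pigeonhole there are indices $i<j$ with the same automaton state \emph{and} the same $C$-label and the same direction. I would then argue that the segment of the snake between positions $i$ and $j$ can be repeated: the group element $g=w_i^{-1}w_j$ (the net displacement) has infinite order because the snake, being injective and infinite, cannot return to a bounded region — more precisely, in a virtually free group an infinite injective path must leave every ball, so iterating $g$ moves the relevant ball off to infinity and the translated copies of the segment are pairwise disjoint and consistently tiled. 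Glueing $\omega$ forward copies and $\omega$ backward copies of this segment produces a $p$-periodic strong directed infinite $\Theta$-snake with $p=j-i$ and period element $g$.

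For the ``furthermore'' clause, suppose $\Z$ is a quotient of $G$, with projection $\rho\colon G\to\Z$, and that strong directed $(G,\Z)$-snakes reach $m$ from $0$ for $|m|$ arbitrarily large. I would again encode into a Rabin automaton the existence of such a reaching snake, but now additionally track the running value of $\rho$ along the snake; since $\rho$ is $1$-Lipschitz with respect to word length, on a regular solution the $\Z$-coordinate changes by a bounded amount per step, and a repeated automaton state at positions $i<j$ now yields a displacement element $g$ with $\rho(g)=$ (the $\Z$-displacement over $[i,j]$). If every regular reaching-snake had all its repeated-state cycles mapping to $\rho$-displacement $0$, then the $\Z$-coordinate along any such snake would be bounded by a constant depending only on the automaton, contradicting that we can reach arbitrarily large $m$; hence some regular snake contains a repeated state across which $\rho$ strictly increases, and folding that cycle as above gives a periodic snake whose projection to $\Z$ is infinite. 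The main obstacle I anticipate is the bookkeeping in the first step: setting up the Rabin automaton so that it correctly guesses a \emph{bi-infinite} injective path in the tree (snakes are indexed by $\Z$, while Rabin automata naturally run on one-way-infinite rays), and ensuring injectivity of the guessed path is expressible with finitely many states — this is where the quasi-isometry to a tree, and the fact that injective infinite paths in trees go to infinity, must be used carefully, possibly by working on the Bass–Serre tree and only reconstructing the snake in $G$ at the end, absorbing the bounded ``fibers'' of the quasi-isometry into the state set.
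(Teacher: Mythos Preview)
Your high-level plan coincides with the paper's: encode strong directed snakes as a regular tree language on a regular normal-form tree $W$ for $G$, apply Rabin's Theorem~\ref{thm:rabin} to obtain a \emph{regular} labeling $x\colon W\to\Sigma$, and then pump. The two concrete gaps are in the pumping step and in the ``furthermore'' clause.

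\textbf{Pumping.} You pigeonhole on the automaton state at the successive snake positions $w_0,w_1,\dots\in W$ and conclude that the segment between two equal states can be iterated. But equality of the output-automaton's state at $w_i$ and $w_j$ only guarantees that the \emph{subtrees below} $w_i$ and $w_j$ carry identical labelings; it says nothing about the labeling \emph{above} them. The snake is directed in $G$, and a single $S'$-step in $G$ may move \emph{up} in $W$ (or sideways through the parent), so the snake's continuation from $w_i$ depends on data outside the subtree at $w_i$. In particular there is no reason the next positions $w_{i+1}$ and $w_{j+1}$ have equal automaton states, and your iteration breaks. The paper fixes this by additionally requiring $b$ to lie in the subtree at $a$; but even then the snake may exit and re-enter that subtree several times before reaching $b$, so one really needs to argue along the geodesic $\xi$ towards the snake's unique limit boundary point and analyze how the snake threads through successive ``levels'' along $\xi$ (this is the ``slots'' bookkeeping in the paper's second proof). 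Your acknowledgement that ``injective infinite paths in trees go to infinity'' is the right instinct, but it has to be converted into the statement that eventually the snake stays in the cone around $\xi$, and then the pigeonhole must be done on vertices of $\xi$ (not on snake positions) together with the full local pattern of snake crossings at that vertex.

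\textbf{The $\Z$-projection clause.} Your argument says: if in the regular snake every repeated-state cycle has $\rho$-displacement $0$, then the $\Z$-coordinate along that snake is bounded, contradicting the hypothesis. The contradiction does not follow: Rabin's theorem hands you \emph{one} regular tree in the language, and nothing prevents that particular snake from having bounded $\Z$-projection while other, non-regular snakes in the same language reach arbitrarily large $m$. The paper resolves this by going \emph{inside} the proof of Theorem~\ref{thm:rabin}: at the step where one \textbf{chooses a path} $\xi_0$ along which all live states recur, one uses the hypothesis (live states occur at arbitrarily large $\Z$-heights) to pick $\xi_0$ with unbounded $\Z$-projection, and to take the repeating block long enough that two occurrences of the pumped state sit at distinct $\Z$-heights. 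Without this extra control on the output of Rabin's theorem, your argument does not close.
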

\begin{proof}
  We begin with $G$, and exploit the assumption that it is virtually free. Firstly, there exists a regular normal form $W\subset S^*$ representing it, namely mapping bijectively and quasi-isometrically to $G$ via evaluation; and this regular language $W$ may effectively be computed from the coding of $G$. Furthermore, $W$ may be assumed to be prefix-closed, and representing a geodesic tree in $G$'s Cayley graph. Indeed, consider a finite-index free subgroup $F_X\le G$, and a transversal $T$ so that $G=F_X\cdot T$; we may then choose $W=\{x_1\cdots x_n t:x_i\in X\cup X^{-1},x_i\neq x_{i+1}^{-1},t\in T\}$.

  The \emph{directed snake language} is the tree language $\mathfrak S$ on $W$ with alphabet $\Sigma=S\cup\{\cdot\}$, in which vertex label $s\in S$ represents an arrow pointing in direction $s$ and $\cdot$ represents no arrow, and the Cayley graph of $G$, when decorated by these arrows, contains an infinite ray starting at $1$ and no other arrow.
  \begin{lemma}
    The language $\mathfrak S$ is regular.
  \end{lemma}
  \begin{proof}
    This directly follows from directed snakes being expressible in monadic second-order logic, see~\cite[Theorem~5]{AB23}.
  \end{proof}
  Note however that $\mathfrak S$ is \emph{not} a closed language, in the topological sense.
  
  We next consider the tiling constraints $\Theta$. They only require a finite amount of extra information at every vertex to check that they are enforced in its neighbourhood (this is where we require $W$ to be quasi-isometric to $G$); so the tree language $\mathfrak S_\Theta$ of $\Theta$-directed snakes is also regular.

    We are now ready to prove the proposition; this is in disguise a pumping argument applied to $\mathfrak S_{\Theta,\Delta}$, based on a Rabin's fundamental result. By Theorem~\ref{thm:rabin}, if there is an infinite snake then $\mathfrak S_\Theta$ is non-empty so there exists a regular tree $x\colon W\to\Sigma$.  In particular, there are $a\ne b\in W$ such $x(a)=x(b)$ is an arrow, namely is part of the snake, $b$ is in the subtree at $a$, and the automaton is in the same state at $a$ and at $b$. Write $g=a^{-1}b$. By symmetry, we may assume that $a$ is visited before $b$ along the snake. The labeling $x$ is defined by a deterministic finite state automaton with output, so the snake's movement is deterministic. Therefero, as we trace the path from $a$ to $b=a g$ along the snake, the same path is traced from $b$ to $b g$, etc.; so $x$ defines an ultimately periodic snake, and repeating infinitely many times the segment of $x$ between $a$ and $b$, translated by powers of $g$, gives the desired periodic snake.

  Now consider the \textbf{boldface} statement in the proof of Theorem~\ref{thm:rabin}. If there is a snake with unbounded projection to $\Z$, then there is a run $r$ of $\mathfrak S_\Theta$ with the property that it has live states at arbitrarily large $\Z$-positions; indeed if $r(w)$ is non-live then the whole subtree below $W$ is constant, and in particular cannot accommodate a snake. Therefore the path $\xi_0$ may be chosen in such a manner that its projection to $\Z$ is unbounded, and since copies of $r_2$ are stitched along $\pi_0$ the resulting regular tree will also carry a snake with infinite $\Z$-projection. All that is needed is that the segment of $\xi_0$ that repeats be chosen long enough that not only it contains all live states, but also it have two occurrences of $q$ that have distinct projections to $\Z$.
\end{proof}

\begin{proof}[Proof avoiding Rabin's Theorem~\ref{thm:rabin}]
  There is some magic involved in our use of Rabin's theorem; in particular, the proof proceeds by removing non-live states, which has the effect of passing from the quasi-tree $G$ to an actual tree such as the free group $F_X$ contained in it. It may be worthwhile to make explicit the argument, and avoid invoking such a (useful but dangerous) hammer.

  We begin as in the proof above with the tree language $\mathfrak S_\Theta$ of infinite snakes on a normal form for $G$.

  For simplicity, we may convert $\mathfrak S_\Theta$ to a tree language over the finite-index free subgroup $F_X$ of $G$, since the Cayley graph of $F_X$ is already a tree. For this, we keep track at each vertex $w\in F_X$ of what the snake does at each of the element of $w T$, where $T$ is the chosen transversal of $F_X$ in $G$. We may also assume that snakes move one $X$-step at a time, by interpolating them appropriately.

  Let us fix a tree in $\mathfrak S_\Theta$. There is a unique boundary point in $F_X$ to which the snake eventually goes, since it visits every node finitely many times. Let $\xi$ be the geodesic in $T_X$ going towards this boundary point. On $\xi$, by pigeonholing we may find two vertices $a<b$ around which everything ``looks the same'' locally, namely the number, order and directions of incoming and outgoing snakes are the same.

  We may then copy the tree labeling between $a$ and $b$ infinitely, by forcing the tree issued from $b$ to take the labeling of the tree issued from $a$. It is then clear that the snake moves infinitely far in this eventually periodic branch: a snake starts at the origin, and since locally we never end a snake (and copying trees does not break the rules in $\Theta$), that snake is going to be infinite. It has to go in the infinite branch, because the ``support'' of the configuration is at a bounded distance from that branch. This is because in the original configuration, the snake eventually stays in the $b$-branch, so it visits only finitely many nodes outside the b branch, and that finite part just gets copied around the periodic geodesic we constructed.

  Now, let us analyze the movement and show that the snake moves along an eventually periodic sequence of moves along the geodesic. Consider the sequence of nodes $a_1=a,a_2=b,a_3=\text{the $b$-node inside the tree copied from $a$ to $b$}$, etc. These are on a geodesic in the tree, and we know the snake visits all of them (but not necessarily the same way it visits $a$ and $b$, of course). Now, in each of them we have the same slots. In some of these slots, the snake enters from ``home'' and exits ``away'', and in some it enters from ``away'' and exits ``home'' (it might move also within slots internally, but these can be considered as ``no-move''). Here ``home'' is the direction of the origin, and ``away'' goes towards the ends of $F_X$. There must be an odd number of slots that get used, with the first one being entered from ``home'' and exited ``away''. The part between $a$ and $b$ in the original tree determines how the ``away''-outgoing slots of $a_i$ connect to the ``home''-incoming slots of $a_{i+1}$, and how the ``home''-outgoing slots connect to ``away''-incoming slots of $a_i$.

  Now, we may look at the movement of the original snake. In each $a_i$, there is some order in which it uses the ``home''-incoming slots and the ``away''-incoming slots. We can find $a_i$ and $a_{i+k}$ such that this sequence is exactly the same. Then an obvious induction shows that it remains the same between $a_i$ and $a_{i+nk}$ of all $n\in\N$, since the part between $a_{i+(n-1)k}$ and $a_{i+nk}$ is the same as that between $a_i$ and $a_{i+k}$. The number of slots that have been used in $a_{i+nk}$ is at least as large as the number of slots that have been used in $a_{i+(n+1)k}$: no more slots may be used on these two, so the snake will never return to their left; and after a bounded number of steps the same slots will be filled in $a_{i+(n+1)k}$ and now the snake is confined even further to the right.

  Suppose that there are $m$ slots in total at each node. Consider the nodes $a_i,a_{i+k},\dots,a_{i+mk}$, and the time when the snake first enters one of the slots of $a_{i+mk}$ (from ``home'' of course). Count how many slots have been used in each. As we argued this is nondecreasing as we go home. Therefore, we have some $a_{i+nk}$ and $a_{i+(n+1)k}$ where these numbers are the same. Now an easy induction shows that no more slots than that ever get used.

  In this version of the proof, assume also that the snake has unbounded projection to $\Z$. Then the vertices $a,b$ may be chosen above in such a manner that they have different projection to $\Z$, and then the snake will not only be quasi-geodesic, its projection to $\Z$ will also be quasi-geodesic.
\end{proof}

\begin{corollary}\label{cor:WhichCase}
  Given a tileset $\Theta$ and a virtually free group $G\in\mathcal F$, by some effective encoding, it is decidable which of the following holds:
  \begin{enumerate}
  \item There is an infinite $\Theta$-snake in $G$ with unbounded projection to $\Z$;
  \item There are infinite $\Theta$-snakes in $G$, but they all have bounded projection to $\Z$;
  \item There are no infinite $\Theta$-snakes in $G$.
  \end{enumerate}
\end{corollary}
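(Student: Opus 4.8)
The plan is to decide the trichotomy by two successive emptiness tests for regular tree languages: the first separates case~(3) from the union of cases~(1) and~(2), and the second separates~(1) from~(2). Both tests are fed by Proposition~\ref{prop:PeriodicSnake}.

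First I would set up the machinery exactly as in the proof of Proposition~\ref{prop:PeriodicSnake}: from the effective encoding of the virtually free group $G$, together with its given epimorphism $\phi\colon G\to\Z$, compute a regular prefix-closed normal form $W$ quasi-isometric to the Cayley graph, and a Rabin automaton $\mathscr A$ recognising the tree language $\mathfrak S_\Theta\subseteq\Sigma^W$ of infinite $\Theta$-snakes. By the first assertion of Proposition~\ref{prop:PeriodicSnake}, there is an infinite $\Theta$-snake in $G$ if and only if there is a periodic one, if and only if $\mathfrak S_\Theta\neq\emptyset$; so run the emptiness algorithm on $\mathfrak S_\Theta$ and output~(3) if it reports ``empty''.

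Assume henceforth $\mathfrak S_\Theta\neq\emptyset$, so we are in case~(1) or~(2). I would first record the equivalences: case~(1) holds if and only if strong directed $\Theta$-snakes $(G,\Z)$-reach $m$ from $0$ for $|m|$ arbitrarily large, which in turn holds if and only if $G$ admits a periodic strong directed $\Theta$-snake with nonzero $\Z$-displacement. The forward implication of the first equivalence is truncation of an infinite snake, the backward implication of the second is iteration of a period, and the remaining implication is precisely the second assertion of Proposition~\ref{prop:PeriodicSnake}. A pumping argument on $\mathscr A$ (in the spirit of Rabin's Theorem~\ref{thm:rabin} and of the proof of Proposition~\ref{prop:PeriodicSnake}) bounds the least period of such a snake, hence also the constant $R=R(\mathscr A,\Theta)$ measuring how far its $\Z$-projection may lag behind its running maximum, by a quantity computable from the data. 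I would then build, by augmenting $\mathscr A$ with a counter tracking ``(current $\Z$-level) minus (running maximum $\Z$-level)'', capped in the window $[-R,0]$, together with a B\"uchi mark at every step attaining a new $\Z$-record, a Rabin automaton recognising the tree language $\mathfrak S_\Theta^{\nearrow}$ of those $\Theta$-snakes for which the counter never underflows and infinitely many record steps occur; let $\mathfrak S_\Theta^{\searrow}$ be its mirror. Then case~(1) holds if and only if $\mathfrak S_\Theta^{\nearrow}\cup\mathfrak S_\Theta^{\searrow}\neq\emptyset$: the easy direction is that such a tree exhibits a snake with $\limsup\phi=+\infty$ (or $\liminf\phi=-\infty$), and the substantive direction is that the periodic, hence quasi-geodesic, snake furnished by case~(1) lies in this union by the choice of $R$. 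So decide this by the emptiness algorithm, outputting~(1) if non-empty and~(2) otherwise.

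The step I expect to be the main obstacle is this last construction. One cannot obtain $\mathfrak S_\Theta^{\nearrow}$ by intersecting $\mathfrak S_\Theta$ with ``the $\Z$-projection is unbounded'': the $\Z$-level of a vertex of $W$ is an unbounded quantity that no finite automaton can track --- the slabs $\phi^{-1}([-M,M])$ are not regular subsets of $W$, and ``unbounded partial sums'' is not an $\omega$-regular condition --- so unboundedness of the $\Z$-projection must be certified by a \emph{local, repeatable} feature (a period of nonzero displacement) rather than by an absolute counter, and the capped counter works precisely because Proposition~\ref{prop:PeriodicSnake} guarantees a witnessing snake whose $\Z$-projection stays boundedly close to a genuine geodesic of $\Z$. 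The care needed is thus twofold: to verify that the least period (equivalently the constant $R$) is genuinely computable from $\mathscr A$ and $\Theta$ --- a pumping estimate complicated by the fact that the snake wanders through $W$ rather than following a single ray --- and, using Lemmata~\ref{lem:ReduceToStrong} and~\ref{lem:BiggerGeneratingSet} together with the quasi-isometry between $W$ and $G$, to arrange that the tiles may observe $\phi$ so that the counter is maintainable. Granting these, the trichotomy is decided by the two emptiness tests, using the effective Boolean algebra and decidable emptiness for regular tree languages.
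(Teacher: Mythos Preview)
Your first step---deciding~(3) versus $\{(1),(2)\}$ by emptiness of $\mathfrak S_\Theta$---matches the paper's. For the separation of~(1) from~(2), however, the paper takes a considerably simpler route: rather than computing a bound $R$ upfront and building a single Rabin automaton with a capped counter and a B\"uchi mark, it \emph{dovetails two semi-decision procedures}. Case~(1) is semi-decided by enumerating periodic snakes, since Proposition~\ref{prop:PeriodicSnake} guarantees that a periodic one with nonzero $\Z$-displacement exists in this case. Case~(2) is semi-decided by testing, for $R=1,2,3,\dots$, the MSO sentence ``there is an infinite $\Theta$-snake whose $\Z$-projection has width~$>R$''; the key observation (again from Proposition~\ref{prop:PeriodicSnake}) is that if there exist snakes of arbitrarily large finite $\Z$-width then there is already one of infinite width, so case~(2) is in fact the sharper statement ``there is a fixed $R$ bounding the width of every infinite $\Theta$-snake'', and the search over $R$ terminates exactly then.

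This completely sidesteps the two obstacles you correctly identify as the crux of your approach. There is no need to extract an explicit pumping bound from Rabin's construction (your $R$), and no need to encode a running-maximum counter inside a tree automaton---a genuinely delicate task, since the automaton traverses the tree $W$ rather than following the snake, so that ``infinitely many $\Z$-records along the snake'' must be rephrased as a condition on rays of $W$. Your route should go through with sufficient care (the MSO expressibility of the counter-consistency and infinitely-many-records conditions, together with computable bounds on the output of Theorem~\ref{thm:rabin}, ultimately suffice), but the paper's dovetailing buys a one-paragraph argument in place of the machinery you are setting up.
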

\begin{proof}
  The existence of an infinite $\Theta$-snake can be coded in monadic second-order logic, by~\cite[Theorem~5]{AB23}, so it is decidable whether we are in the first two cases or the third.

  Now if there are infinite $\Theta$-snakes with bounded but arbitrarily large projection to $\Z$, then by the last part of Proposition~\ref{prop:PeriodicSnake} there is an infinite $\Theta$-snakes with unbounded projection to $\Z$.

  Therefore, there only remains the first case or the refined second case ``there is a bound $R$ such that all infinite $\Theta$-snakes have projection to $\Z$ of width at most $R$''.

  The first case can be semi-decided, by Proposition~\ref{prop:PeriodicSnake}, since periodic snakes can be effectively enumerated.

  The second case can also be semi-decided: for every $R\in\N$, the statement ``there is an infinite $\Theta$-snake whose projection to $\Z$ has range $> R$'' is expressible in monadic second-order logic. Indeed let $L_R\subset S^*$ be the language of words whose evaluation and projection to $\Z$ is not in $[-R,R]$. This language is regular, an automaton recognizing it having states $\{-R,1-R,\dots,R\}\cup\{\surd\}$, initial state $0$, accepting state $\surd$, and transitions that keep track of the $\Z$-projection, or accept if it falls out of the range $[-R,R]$. To test the existence of an infinite $\Theta$-snake whose projection to $\Z$ has width $>R$, we simply overlay on the snake predicate the condition that the snake's labels has a factor in $L_R$.
\end{proof}

We now state our main technical result. Consider the space $\mathcal M_S$ of $S$-marked groups; fix a generator $t\in S$, and an epimorphism $\pi\colon F_S\twoheadrightarrow\Z$ with $\pi(t)=1$. Let $\mathcal F$ be the subspace of virtually free groups $G\in\mathcal{M}$ through which $\pi$ factors; so groups $G\in\mathcal F$ come equipped with a homomorphism $G\twoheadrightarrow\Z$ still written $\pi$. Elements of $\mathcal F$ can be coded in an effective manner, for example by the coset table of finite-index free subgroup. We identify $\mathcal F$ with its set of codings, and henceforth can speak of computable properties of $\mathcal F$.

\begin{theorem}\label{thm:technical}
  There exists a computable function $\Phi\colon\mathcal{F}^* \to \N$ such that the following holds. Let $G \in \mathcal M$ be a group which is a limit of a computable sequence of virtually free groups $G_n \in \mathcal F$, such that
  \begin{enumerate}
  \item if $I \subset \Z$ is a finite interval and $r\in\N$ then, for large enough $n$, the $B(G_n,r)$-connected components of $\pi^{-1}(I)$ are finite;\label{technical:1}
  \item $G_{n+1}$ agrees with $G_n$ in the $\Phi((G_1, \dots, G_n))$-ball for each $n$.\label{technical:2}
  \end{enumerate}
  Then $G$ has decidable snake problem and infinite snake problem.
\end{theorem}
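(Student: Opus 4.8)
First I would reduce to a single type of question: by Lemmata~\ref{lem:ReduceToStrong} and~\ref{lem:BiggerGeneratingSet} it suffices to decide, given a tileset $\Theta$, whether $G$ admits an infinite strong directed $\Theta$-snake and whether it admits a strong directed $\Theta$-snake of any shape, and we may preprocess $\Theta$ as in the last paragraph of the proof of Lemma~\ref{lem:ReduceToStrong} so that only the constraints on \emph{consecutive} vertices of a directed snake are nontrivial (any other $S'$-adjacency is then automatically legal). I will use throughout that the convergence $G_n\to G$ is realized by surjections $\phi_n\colon G_n\twoheadrightarrow G$ respecting the $S$-marking and the projection $\pi$ to $\Z$ (the colimit structure of the tower), and that by condition~\ref{technical:2} the restriction of $\phi_n$ to the $\Phi(G_1,\dots,G_n)$-ball of $G_n$ is an isomorphism onto the corresponding ball of $G$; in particular every ball of $G$ of prescribed radius is computable. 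The function $\Phi$ is designed so that periodic snakes found in the $G_n$ become transportable to $G$: on input $(G_1,\dots,G_m)$, for every tileset $\Theta$ of description length $\le m$ and every $k\le m$, run the algorithm of Corollary~\ref{cor:WhichCase} on $(G_k,\Theta)$, and whenever it reports case~(1) extract, by Proposition~\ref{prop:PeriodicSnake} (enumerating periodic snakes until one is found), a periodic strong directed $\Theta$-snake in $G_k$ with unbounded $\Z$-projection, and record the radius of a ball of $G_k$ containing one period of it together with the computably bounded window on which its injectivity must be checked; set $\Phi(G_1,\dots,G_m)$ to be twice the maximum of all these radii, of $m$, and of $\Phi(G_1,\dots,G_{m-1})$. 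This is computable and nondecreasing.

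The algorithm for the infinite snake problem runs two semi-decision procedures in parallel. To semi-decide ``yes'', iterate $n=|\Theta|,|\Theta|+1,\dots$ and test with Corollary~\ref{cor:WhichCase} whether $G_n$ is in case~(1); if so, extract a periodic $\Theta$-snake $P$ in $G_n$ with unbounded $\Z$-projection and translation element $g$. Since $\pi(g)\neq0$, the sequence $\pi\circ P$ meets each level only boundedly often, so $P(i)=P(j)$ can only occur for $|i-j|$ below an explicit bound depending on $P$; hence injectivity of a transported copy follows from its injectivity on one bounded window. By the construction of $\Phi$ the map $\phi_n$ is an isomorphism on a ball large enough to contain that window and one period of $P$, so transporting $P$ through $\phi_n^{-1}$ and iterating along powers of $g$ yields a genuine infinite strong directed $\Theta$-snake in $G$ (consecutive constraints survive because they are local, and extra adjacencies created in $G$ are legal after the preprocessing); output ``yes''. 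To semi-decide ``no'', iterate $L=1,2,\dots$ and test by a finite search in the computable ball $B_G(L+r)$ whether $G$ has a finite $\Theta$-snake of length $L$; if it does not, output ``no'', since then $G$ has no finite $\Theta$-snake of length $\ge L$, while an infinite snake would restrict to finite snakes of every length.

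The heart of the matter is that exactly one of these two procedures always halts. If $G$ has no infinite $\Theta$-snake, König's lemma furnishes an $L$ with no finite $\Theta$-snake of length $L$, and the ``no'' procedure halts. If $G$ does have an infinite strong directed $\Theta$-snake $\omega$, lift it through $\phi_n$: choose a preimage of $\omega(0)$ and follow the directions stored in the tiles. The lift $\widetilde\omega$ is injective because $\phi_n\circ\widetilde\omega=\omega$ is, it is a valid strong directed $\Theta$-snake because $\phi_n$ preserves the only relevant (consecutive) constraints, and $\pi\circ\widetilde\omega=\pi\circ\omega$. Were $\pi\circ\omega$ bounded, $\widetilde\omega(\Z)$ would be an infinite connected subset of a fixed slab $\pi^{-1}(I)$ in $G_n$, contradicting condition~\ref{technical:1} for $n$ large; so $\pi\circ\omega$ is unbounded, whence $G_n$ lies in case~(1) of Corollary~\ref{cor:WhichCase} for every sufficiently large $n$, and the ``yes'' procedure halts. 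I expect this step to be the main obstacle: it is exactly where the surjections $G_n\twoheadrightarrow G$ are indispensable (to push a hypothetical snake in $G$ down into a virtually free group, where Corollary~\ref{cor:WhichCase} and the pumping of Proposition~\ref{prop:PeriodicSnake} apply), where condition~\ref{technical:1} is used (to force that snake to escape towards $\Z=\infty$), and where $\Phi$ must be calibrated so that the periodic snake it produces lands back inside $G$; getting these three ingredients to mesh is the whole point.

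For the snake problem proper, run three semi-decision procedures in parallel: semi-decide ``$G$ has an infinite $\Theta$-snake'' as above; semi-decide ``$G$ has a $\Theta$-ouroboros'' by searching the computable balls of $G$ across all lengths; and semi-decide ``$G$ has no $\Theta$-snake at all'' by first finding, when it exists, an $L$ with no finite $\Theta$-snake of length $L$ -- which exists precisely when $G$ has no infinite $\Theta$-snake -- and then checking the finitely many lengths $<L+2$ for ouroboroi, using that an ouroboros of length $\ell$ restricts to a finite $\Theta$-snake of length $\ell-2$. In every case one of the three halts, so the snake problem is decidable; note this does not decide the ouroboros problem, consistent with its remaining open. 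Finally, Lemmata~\ref{lem:ReduceToStrong} and~\ref{lem:BiggerGeneratingSet} transfer decidability to the weak and undirected variants of the snake and infinite snake problems, which completes the proof of the theorem and of Theorem~\ref{thm:main}.
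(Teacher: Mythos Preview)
Your argument is correct and uses the same essential ingredients as the paper's proof: the trichotomy of Corollary~\ref{cor:WhichCase} on each $G_n$, the extraction of periodic snakes with nontrivial $\Z$-drift via Proposition~\ref{prop:PeriodicSnake}, the lifting of snakes along the surjections $G_n\twoheadrightarrow G$ (Proposition~\ref{prop:Lift}), and condition~\eqref{technical:1} to exclude infinite snakes in $G$ with bounded $\Z$-projection. The packaging differs: the paper describes $\Phi$ stage by stage as a game strategy, treating case~(2) of Corollary~\ref{cor:WhichCase} by \emph{deferring} until condition~\eqref{technical:1} collapses it into an ouroboros-or-nothing situation at a later $G_m$, and it preserves any ouroboros found in some $G_n$; you instead give $\Phi$ by a closed formula, argue directly that an infinite snake in $G$ must have unbounded $\Z$-image (so case~(2) is simply vacuous from the algorithm's standpoint), and detect ouroboroi by enumeration inside $G$ itself rather than inheriting them from some $G_n$ --- a slightly cleaner bookkeeping that yields the same conclusion. (One typographical slip: when you transport the periodic snake $P$ from $G_n$ to $G$ you mean $\phi_n$, not $\phi_n^{-1}$.)
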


This statement is quite a mouthful, but intuitively it simply states that whenever we have a computable construction of limits of virtually free groups with a fixed generating set and satisfying the two technical properties, and we can at each step pick how large a ball we preserve at the next step, then we can produce groups $G$ with decidable snake and infinite snake problems by always preserving a large enough ball, where $\Phi$ gives the size bounds. Such a general construction was precisely described in the previous section (the HNN extensions $G_n$ are all generated by $S=A_0\cup\{t,t^{-1}\}$, by Lemma~\ref{lem:generate}; HNN extensions always come equipped with a homomorphism onto $\Z$; Assumption~\ref{technical:1} is given by Lemma~\ref{lem:locfinite} while Assumption~\ref{technical:2} follows from Lemma~\ref{lem:converge}.

One can think of it as follows: consider a game where Alice and Bob construct a group as a limit of virtually free groups in $\mathcal M$, all the while preserving a homomorphism onto $\Z$ taking value $1$ on $t$. Alice at each turn picks a number $r$, and Bob picks a proper quotient so that the ball of radius $r$ is preserved. Alice wins if at some point Bob is not able to construct a virtually free proper quotient, or if the group obtained in the limit actually has decidable snake problems, or if the first item fails and from some point on Bob no longer keeps making the $\Z$-fibers more coarsely disconnected (in the sense of that item). The function $f$ is a winning strategy for this game. Thus we obtain computable examples of groups with decidable snake problems by following any computable strategy for Bob such that a proper quotient is always picked.

We state Theorem~\ref{thm:technical} in this manner, with the expectation that slightly different constructions of the groups $G_n$ will lead to more interesting examples of non-virtually-free groups with decidable tiling properties.

\begin{proof}
Instead of describing the function $\Phi$, it is easier to explain how to play the game against Bob, to construct a sequence $(G_n)_n$, as this allows us to ``remember'' what we have in mind for our choices, rather than have to deduce what we have done in the past from a given sequence $(G_1, \ldots, G_n)$. So at each step we have access to a sequence of groups $G_1, \ldots, G_n$, and some finite mount of information (things we should keep track of in future steps that might give lower bounds on future radii). We then choose how big a ball $G_{n+1}$ should preserve (in addition to the items in the statement of the theorem).

  First of all, we enumerate all directed tile sets $(S_n,C_n,D_n)$ where $S_n\subset F_S$ is a finite memory set, $C_n \subseteq C'_n \times S_n$ is a finite colour set, and $D_n \subseteq S_n\times C_n \times S_n$. Initially, Bob gives us some group $G_0$ which is virtually free and has the two properties. Assume the groups $G_0, \ldots, G_n$ are fixed and let us explain how to pick the size of the ball to preserve in the next step.
  
  Let us consider the directed tile set $\Theta_n = (S_n,C_n,D_n)$ on the group $G_n$. Note that we can easily solve the snake problem, infinite snake problem and ouroboros problem on $G_n$, so when thinking about $G_{n+1}$, we can assume we know these solutions. We first consider only the snake problem (the infinite snake problem will be similar but easier, and we can pick the maximum of the radii required by both). We have to figure out a suitable size of preserved balls so that the snake problem is decidable for $G$ no matter how the following groups $G_m$ are picked. At stage $n$, we only explicitly think about the tile set $\Theta_n$, but we set up some future constraints for later stages (which will also just be about preserving large enough balls), so we don't have to come back to it.

   Suppose first that the snake problem has negative solution for $\Theta_n$ in $G_n$. Then there is a radius $r$ that witnesses this, meaning that there is no snake from $1$ to $G_n \setminus B(G_n,r)$ for some $r\in\N$. This choice of $r$, if also imposed at all future steps, guarantees that $\Theta_n$ will also not snake tile the limit group $G$.
  
  Suppose next that there is a snake for $\Theta_n$ in $G_n$. If it is an ouroboros, then there is a radius $r$ that witnesses it and again pick at least radius $r$ from now on.
  
  Suppose next that there is an infinite snake. Remember that for all $m$ we have epimorphisms $G_m\twoheadrightarrow G\twoheadrightarrow\Z$. Suppose first that snakes can reach arbitrarily large numbers in $\Z$. By Proposition~\ref{prop:PeriodicSnake}, there is an infinite directed strong snake $\omega$ with period $p$ which reaches such numbers. Now as long as $\omega(\{0,\dots,p\})$ and its $S_n$-neighbourhood are mapped injectively in future quotients, the snake survives all the way to $G$, since no power of $t$ is ever killed.
   
   Suppose next that snakes may only reach bounded values in $\Z$. Then we will keep track of this snake in the following steps, not yet knowing what the fate of its snake problem is. By Assumption~\eqref{technical:1}, the snake will not be able to move arbitrarily far, since eventually in $G_m$ the set of elements that may be reached while remaining at bounded distance in $\Z$. (For this, note that by Proposition~\ref{prop:Lift}, the $\Z$-distance we can move can never increase when passing to quotients.) So at that stage, if there is an ouroboros, then we start keeping it alive in the remaining construction, and if there isn't an ouroboros, then there is a bound on snake sizes and again it suffices to take a large enough radius to witness the non-tileability of $\Theta_n$ in every $G_m$.

   Finally, by Corollary~\ref{cor:WhichCase} we know in which case of the above alternatives we are.
   
   We move now to the infinite snake problem and show how the argument above should be adapted. If there is an infinite snake reaching arbitrarily large values in $\Z$, then again then there is a periodic snake by the first part of Proposition~\ref{prop:PeriodicSnake}, and we can simply keep it alive.
   
   If there is no infinite snake, or if all snakes have a bounded image in $\Z$, then a large enough radius guarantees that there will be no infinite snake in the limit $G$.
\end{proof}

\begin{remark}
  The above construction cannot extend to the ouroboros problem: on the free group $F$, it is easy to construct a directed tile set which does not admit an ouroboros, but admits one on \emph{every} quotient: on every edge of $F$'s Cayley graph put an arrow pointing towards $1$, and accept as ouroboroi all directed paths.

  Thus, one would have to understand which quotients introduce an ouroboros, and this seems similar to the issues involved in trying to show that the groups constructed here may have decidable domino problem.
\end{remark}

\bibliographystyle{plain}
\bibliography{bib}{}

\begin{thebibliography}{10}

\bibitem{AB23}
Nathalie Aubrun and Nicolas Bitar.
\newblock Domino snake problems on groups.
\newblock In {\em Fundamentals of computation theory}, volume 14292 of {\em
  Lecture Notes in Comput. Sci.}, pages 46--59. Springer, Cham, [2023]
  \copyright 2023.

\bibitem{BS18}
Alexis Ballier and Maya Stein.
\newblock The domino problem on groups of polynomial growth.
\newblock {\em Groups Geom. Dyn.}, 12(1):93--105, 2018.

\bibitem{BaTr78}
Benjamin Baumslag and Marvin Tretkoff.
\newblock Residually finite hnn extensions.
\newblock {\em Communications in Algebra}, 6(2):179--194, 1978.

\bibitem{Ba63}
Gilbert Baumslag.
\newblock On the residual finiteness of generalised free products of nilpotent
  groups.
\newblock {\em Transactions of the American Mathematical Society},
  106(2):193--209, 1963.

\bibitem{Berger66}
Robert Berger.
\newblock The undecidability of the domino problem.
\newblock {\em Mem. Amer. Math. Soc.}, 66:72, 1966.

\bibitem{Britton63}
John~L. Britton.
\newblock The word problem.
\newblock {\em Ann. of Math. (2)}, 77:16--32, 1963.

\bibitem{champetier:marked}
Christophe Champetier.
\newblock L'espace des groupes de type fini.
\newblock {\em Topology}, 39(4):657--680, 2000.

\bibitem{Et91}
Yael Etzion.
\newblock {\em On the solvability of domino snake problems}.
\newblock {MSc}. thesis, Dept. of Computer Science and Applied Mathematics, The
  Weizmann Institute of Science, Rehovot, Israel, 1991.

\bibitem{EtHaMy94}
Yael Etzion-Petruschka, David Harel, and Dale Myers.
\newblock On the solvability of domino snake problems.
\newblock {\em Theoretical Computer Science}, 131(2):243--269, 1994.

\bibitem{Ka94}
Jarkko Kari.
\newblock Reversibility and surjectivity problems of cellular automata.
\newblock {\em Journal of Computer and System Sciences}, 48(1):149--182, 1994.

\bibitem{Kari03}
Jarkko Kari.
\newblock Infinite snake tiling problems.
\newblock In {\em Developments in language theory}, volume 2450 of {\em Lecture
  Notes in Comput. Sci.}, pages 67--77. Springer, Berlin, 2003.

\bibitem{LS77}
Roger~C. Lyndon and Paul~E. Schupp.
\newblock {\em Combinatorial group theory}, volume Band 89 of {\em Ergebnisse
  der Mathematik und ihrer Grenzgebiete [Results in Mathematics and Related
  Areas]}.
\newblock Springer-Verlag, Berlin-New York, 1977.

\bibitem{My79}
Dale Myers.
\newblock Decidability of the tiling connectivity problem. abstract 79t-e42.
\newblock {\em Notices Amer. Math. Soc.}, 26(5):A--441, 1979.

\bibitem{Thomas90}
Wolfgang Thomas.
\newblock Automata on infinite objects.
\newblock In {\em Handbook of theoretical computer science, {V}ol.\ {B}}, pages
  133--191. Elsevier, Amsterdam, 1990.

\bibitem{Tr73}
Marvin Tretkoff.
\newblock The residual finiteness of certain amalgamated free products.
\newblock {\em Mathematische Zeitschrift}, 132:179--182, 1973.

\end{thebibliography}

\end{document}